\theoremstyle{plain}
\newtheorem{thm}{Theorem}[section]
\newtheorem{lemma}[thm]{Lemma}
\newtheorem{prop}[thm]{Proposition}
\theoremstyle{definition}
\newtheorem{definition}[thm]{Definition}
\newtheorem{remark}[thm]{Remark}
\newtheorem{example}[thm]{Example}
\def\mequal{\mathrel{\mathpalette\@mvereq{\hbox{\sevenrm m}}}} 
\def\@mvereq#1#2{\lower.5\p@\vbox{\baselineskip\z@skip\lineskip1.5\p@
    \ialign{$\m@th#1\hfil##\hfil$\crcr#2\crcr=\crcr}}}
\def\partr#1#2{/\kern-.08333em/_{#1,#2}^{\phantom{.}}}
\def\invpartr#1#2{/\kern-.08333em/_{#1,#2}^{-1}} 
\def\hpartr#1#2{/\kern-.08333em/_{#1,#2}^{h}}
\def\Epartr#1#2{/\kern-.08333em/_{#1,#2}^{E}}
\def\newdot{{\kern.8pt\cdot\kern.8pt}}
\def\,{\relax\ifmmode\mskip\thinmuskip\else\thinspace\fi}
\def\{{\relax\ifmmode\lbrace\else $\lbrace$\fi}
\def\}{\relax\ifmmode\rbrace\else $\rbrace$\fi}
\font\sevenrm=cmr7
\newcommand\CC{\mathbb{C}}
\newcommand\EE{\mathbb{E}}
\newcommand\RR{\mathbb{R}}
\newcommand\TT{\mathbb{T}}
\newcommand\ZZ{\mathbb{Z}}
\newcommand{\SD}{{\mathscr D}}
\newcommand{\SG}{{\mathscr G}}
\newcommand{\SH}{{\mathscr H}}
\newcommand{\SP}{{\mathscr P}}
\def\mathpal#1{\mathop{\mathchoice{\text{\rm #1}}%
   {\text{\rm #1}}{\text{\rm #1}}%
   {\text{\rm #1}}}\nolimits}
\def\Ric{\mathpal{Ric}}
\def\id{\mathpal{id}}
\def\trace{\mathpal{tr}}
\def\di{\displaystyle}
\def\f{\frac}
\def\a{\alpha }
\def\D{\Delta }
\def\d{\delta }
\def\e{\varepsilon }
\def\G{\Gamma }
\def\g{\gamma }
\def\l{\lambda }
\def\n{\nabla }
\def\Om{\Omega }
\def\om{\omega }
\def\s{\sigma }
\begin{document}

\title[Navier-Stokes equation and diffusions of homeomorphisms]{Lagrangian Navier-Stokes  diffusions on manifolds: variational principle and stability}

\author[M. Arnaudon]{Marc Arnaudon} \address{Laboratoire de Math\'ematiques et
  Applications\hfill\break\indent CNRS: UMR 6086\hfill\break\indent
  Universit\'e de Poitiers, T\'el\'eport 2 - BP 30179\hfill\break\indent
  F--86962 Futuroscope Chasseneuil Cedex, France}
\email{marc.arnaudon@math.univ-poitiers.fr}

\author[A. B. Cruzeiro]{Ana Bela Cruzeiro} \address{GFMUL and Dep. de Matem\'atica IST(TUL). \hfill\break\indent
  Av. Rovisco Pais\hfill\break\indent
  1049-001 Lisboa, Portugal}
\email{abcruz@math.ist.utl.pt}
%\keywords{}

%%%%%%%%%%%%%%%%%%%%%%%%%%%%%%%%%%%%%%%%%%%%%%%%%%%%%%%%%%%%%%%%%%%%%%%%%%
%
%  Abstract, Keywords, AMS classification
%
%%%%%%%%%%%%%%%%%%%%%%%%%%%%%%%%%%%%%%%%%%%%%%%%%%%%%%%%%%%%%%%%%%%%%%%%%%

\begin{abstract}\noindent

We prove a variational principle for stochastic Lagrangian Navier-Stokes trajectories
on manifolds. We study the behaviour of such trajectories concerning stability as well as
rotation between particles; the two-dimensional torus case is described in detail.
  
\end{abstract}

\maketitle
\tableofcontents

%%%%%%%%%%%%%%%%%%%%%%%%%%%%%%%%%%%%%%%%%%%%%%%%%%%%%%%%%%%%%%%%%%%%%%%%%%%%
%
%  Actual Body of the Paper
%
%%%%%%%%%%%%%%%%%%%%%%%%%%%%%%%%%%%%%%%%%%%%%%%%%%%%%%%%%%%%%%%%%%%%%%%%%%%%

%%%%%%%%%%%%%%%%%%%%%%%%%%%%%%%%%%%%%%%%%%%%%%%%%%%%%%%%%%%%%%%%%%%%%%%%%%%%
\section{Introduction}\label{Section1}
\setcounter{equation}0

As discovered by V. I. Arnold (\cite{Arnold:66}) the motion of an incompressible non viscous fluid can be characterized as a geodesic
on a group of diffeomorphisms. This  
point of view allows in particular  to derive properties of the Lagrangian Euler flow, such as stability, through 
the study of the geometry of the group (\cite{Misiolek:93}).

When the fluid is  viscous, namely for the Navier-Stokes equation, one can describe the Lagrangian
 trajectories as realizations of a stochastic process and interpret the associated drift, solving Navier-Stokes, as an expectation over this process. This
 intrinsically probabilistic  approach we follow here  is inspired by \cite{Nakagomi:81}, \cite{Yasue:83}. Similar stochastic models are used for
   example in \cite{Constantin-Iyer:08}. Then the trajectories remain, in an appropriate sense, geodesics and are almost sure solutions of
   a variational principle. This was shown in \cite{Cipriano-Cruzeiro:07} for the two-dimensional torus. 

We prove a variational principle for the Lagrangian Navier-Stokes diffusions in a compact Riemannian
 manifold. Furthermore we study its stability properties. The behaviour of the
 trajectories depends on the intensity of the noise as well as on the metric of the underlying
 manifold. The example of the torus is studied in detail. Finally we describe the evolution in time of the rotation between stochastic Lagrangian particles.

Let $(M,\bf g \rm )$ be a compact oriented Riemannian manifold without boundary.

Recall that the It\^o differential of an $M$-valued semimartingale $Y$ is  defined by 
\begin{equation}
 \label{E1}
dY_t=P\left(Y\right)_td\left(\int_0^\cdot P\left(Y\right)_s^{-1}\circ dY_s\right)_t
\end{equation}
where 
\begin{equation}\label{E2}
P\left(Y\right)_t : T_{Y_0}M\to T_{Y_t}M
\end{equation}
is the parallel transport along $t\mapsto Y_t$. Alternatively, in local coordinates, 
\begin{equation}\label{E3}
dY_t=\left(dY_t^i+\f12\G_{jk}^i(Y_t)dY_t^j\otimes dY_t^k\right)\partial_i
\end{equation}
where $\G_{jk}^i$ are the Christoffel symbols of the Levi-Civita connection. 

If the semimartingale $Y_t$ has an absolutely continuous drift, we denote it by $DY_t\,dt$: for every $1$-form $\a\in\G(T^\ast M)$, the finite variation part of 
\begin{equation}\label{E4}
\int_0^\cdot \left\langle \a(Y_t),\ dY_t\right\rangle
\end{equation}
is 
\begin{equation}\label{E5}
\int_0^\cdot \left\langle \a(Y_t),\ DY_t\,dt\right\rangle
\end{equation}

Let $G^s$, $s\ge 0$ be the infinite dimensional group of homeomorphisms on $M$ which belong to $H^s$, the Sobolev space of order~$s$. For $s>\f{m}2+1$, $m={\rm dim}M$, $G^s$ is a $C^\infty$ Hilbert manifold. The volume preserving homeomorphism subgroup will be denoted by $G_V^s$: 
$$
G_V^s=\{g\in G^s, \ :\ g_\ast \mu=\mu\},
$$
with $\mu$ the volume element associated to the Riemannian metric. We denote by $\SG^s$ (resp. $\SG_V^s$) the Lie algebra of $G^s$ (resp. $G_V^s$). See \cite{Misiolek:93} for example.

On $M$ we consider an incompressible Brownian flow $g_u(t)\in G^0_V$ with covariance $a\in \G(TM\odot TM)$ and time dependent  drift $u(t,\cdot)\in \G(TM)$. 
We assume that for all $x\in M$, $a(x,x)=2\nu {\bf g }^{-1}(x)$ for some $\nu>0$. This means that  
\begin{equation}\label{E6}
dg_u(t)(x)\otimes dg_u(t)(y)=a\left(g_u(t)(x), g_u(t)(y)\right)\,dt,
\end{equation}
\begin{equation}
 \label{E7}
dg_u(t)(x)\otimes dg_u(t)(x)=2\nu {\bf g }^{-1}\left(g_u(t)(x)\right)\,dt,
\end{equation}
the drift of $g_u(t)(x)$ is absolutely continuous and satisfies $Dg_u(t)(x)=u(t,g_u(t)(x))$. The generator of this process is

$$L_u =\nu \Delta^h +\f{\partial}{\partial t} +\partial_u $$
where $\D^h $ is the horizontal Laplacian.
The parameter $\nu$ will be called the speed of the Brownian flow.

If the time is indexed by $[0,T]$ for some $T>0$, we define the action functional by 
$$
S(g_u)=\f12 \EE\left[\int_0^T\left(\int_M\left\|Dg_u(t)(x)\right\|^2\,dx\right)\,dt\right].
$$

\section{The variational principle}\label{Section2}
\setcounter{equation}0

Define 
\begin{equation}
 \label{E8}
\SH=\left\{v\in C^1([0,T],\ \SG_V^\infty),\ v(0,\cdot)= 0,\ v(T,\cdot)=0\right\}
\end{equation}

Given $v\in \SH$, consider the following ordinary differential equation
\begin{align}
 \label{E9}
\begin{split}
 \f{de_t(v)}{dt}&=\dot v(t,e_t(v))\\
e_0(v)&=e
\end{split}
\end{align}
where $e$ is the identity of $G_V^\infty$.
Since $v$ is divergence free, $e_\cdot (v)$ is a $G_V^\infty$-valued deterministic path. 

We denote by $\SP$ the set of continuous $G_V^0$-valued semimartingales $g(t)$ such that $g(0)=e$. 
Then for all $v\in \SH$, we have  $e_t(v)\circ g_u(t)\in \SP$.

\begin{definition}
 \label{D1}
Let $J$ be a functional defined on $\SP$ and taking values in $\RR$. We define its left and right derivatives in the direction of $h(\cdot)=e_\cdot (v)$, $v\in \SH$ at a process $g\in \SP$ respectively, by 
\begin{align}
 \label{E10}
\begin{split}
 (D_L)_hJ[g]&=\f{d}{d\e}J[e_\cdot (\e v)\circ g(\cdot)]\vert_{\e=0},\\
(D_R)_hJ[g]&=\f{d}{d\e}J[g(\cdot)\circ e_\cdot (\e v)]\vert_{\e=0}.
\end{split}
\end{align}
A process $g\in \SP$ wil be called a critical point of the functional $J$ if 
\begin{equation}
 \label{E11}
(D_L)_hJ[g]=(D_R)_hJ[g], \ \forall h=e(v),\ v\in \SH.
\end{equation}
\end{definition}

\begin{thm}
 \label{T1}
Let $(t,x)\mapsto u(t,x)$ be a smooth time-dependent divergence-free vector field on $M$, defined on $[0,T]\times M$.  Let $g_u(t)$ a stochastic Brownian flow with speed $\nu>0$ and drift $u$. The stochastic process $g_u(t)$ is a critical point of the energy functional $S$ if and only if the vector field $u(t)$ verifies the Navier-Stokes equation
\begin{equation}
 \label{E12}
\f{\partial u}{\partial t}+\n_uu=\nu \square u-\n p.
\end{equation}
\end{thm}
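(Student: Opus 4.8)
The strategy is to compute the two variations $(D_L)_hS[g_u]$ and $(D_R)_hS[g_u]$ explicitly and compare them, using the fact that for $v\in\SH$ the perturbations $e_\cdot(\e v)\circ g_u(\cdot)$ and $g_u(\cdot)\circ e_\cdot(\e v)$ are semimartingales whose drifts can be read off from the chain rule and the It\^o formula. First I would compute $(D_R)_hS[g_u]$: perturbing on the right by $e_t(\e v)$ amounts to relabelling the initial particle positions by the flow of $\e v$, so $g_u(t)\circ e_t(\e v)$ is again a Brownian flow with the same covariance $a$, and its drift at a point $x$ is $Dg_u(t)(x)$ transported along with an extra term coming from $\f{d}{d\e}e_t(\e v)=\dot v(t)$ at $\e=0$. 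Differentiating $S$ under the expectation, using incompressibility of $g_u(t)$ to rewrite the spatial integral over $M$ and integrating by parts in $x$, one gets a boundary-free expression; then integrating by parts in $t$ and using $v(0,\cdot)=v(T,\cdot)=0$ one obtains $(D_R)_hS[g_u]$ as the $L^2$-pairing of $\dot v$ against a drift-type quantity built from $u$.

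Next I would compute $(D_L)_hS[g_u]$: perturbing on the left by $e_t(\e v)$ replaces $g_u(t)$ by $e_t(\e v)\circ g_u(t)$, which is a semimartingale whose martingale part is $e_t(\e v)$-pushed-forward of that of $g_u(t)$ and whose drift picks up (i) the pushed-forward original drift $u$, (ii) the term $\dot v(t)$ from differentiating $e_t(\e v)$, and crucially (iii) an It\^o correction term quadratic in the noise: because $e_t(\e v)$ is being composed with a genuinely stochastic flow, the second-order term in the It\^o formula produces a contribution involving $a$ contracted with the second derivatives of $e_t(\e v)$, which at $\e=0$ linearizes to a term of the form $\nu\,(\text{second-order operator})v$. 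This is the step I expect to be the main obstacle: identifying that It\^o correction precisely and recognizing it as $\nu\square v$ (or its adjoint against $u$), where $\square$ is the relevant Laplace-type operator appearing in \eqref{E12}. It requires care with the definition of the It\^o differential in \eqref{E1}--\eqref{E3}, with the horizontal Laplacian $\D^h$ entering the generator $L_u$, and with the covariance normalization $a(x,x)=2\nu\,{\bf g}^{-1}(x)$ from \eqref{E7}.

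Having both variations, I would form the critical-point condition $(D_L)_hS[g_u]=(D_R)_hS[g_u]$ from Definition \ref{D1}. The $\dot v$-terms common to both sides cancel, and after integrating by parts in $t$ once more (legitimate since $v$ vanishes at the endpoints) the condition becomes: the $L^2(M,\mu)$-pairing of $v(t,\cdot)$ against $\p_t u+\n_u u-\nu\square u+\n p'$ vanishes for some function $p'$ and for all $v\in\SH$. Here the pressure gradient $\n p$ appears precisely because $v$ ranges only over divergence-free fields, so the condition only determines $\p_t u+\n_u u-\nu\square u$ up to a gradient; this is the standard Hodge/Leray-projection argument on the compact manifold $M$ (the orthogonal complement of $\SG_V^\infty$ in $\G(TM)$ is the space of gradients). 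Since this must hold for all $t\in[0,T]$ and all divergence-free $v$, we conclude that $\p_t u+\n_u u-\nu\square u=-\n p$ for some scalar field $p$, i.e. \eqref{E12}; conversely, retracing the steps shows this equation forces equality of the two derivatives, giving the "if" direction.
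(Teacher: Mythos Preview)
Your overall outline is reasonable but diverges from the paper in two respects, and one step is under-justified.

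First, the paper does not compute $(D_R)_hS$ at all: it opens with the one-line observation that $S$ is right invariant, so the critical-point condition reduces to $(D_L)_hS[g_u]=0$, and the rest of the proof is devoted solely to the left derivative. Your plan to compute both sides and cancel ``$\dot v$-terms common to both'' is therefore extra work; and in fact the $\dot v$ contribution on the right side enters through $T g_u(t)\cdot\dot v(t,\cdot)$ (the tangent map of the stochastic flow acting on $\dot v$), not as a bare $\dot v$, so the cancellation you anticipate is not automatic. Either you argue, as the paper asserts, that the right derivative vanishes, or you must actually carry out that computation and show whatever survives is a gradient and hence absorbed by the pressure.

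Second, where you propose to ``integrate by parts in $t$'' after identifying the It\^o correction, the paper takes a different and heavier route. It does compute, exactly as you suggest, $\nabla_\e|_{\e=0}D\bigl(e_t(\e v)(g_u(t)(x))\bigr)=\nabla_uv+\nu\,\square v+\partial_tv$, using the commutation $\nabla_\e\Delta=\square\,\partial_\e$ between the tension field and the damped Laplacian. But then, rather than integrating by parts deterministically, the paper writes the It\^o product formula for $\langle u_t,v_t\rangle$ along $g_u(t)(x)$, uses $v_T=0$, and tracks the covariant It\^o differentials $\SD u_t,\SD v_t$ together with the drift for the \emph{damped} connection $\nabla^c$ (so that the Ricci terms from $\square=\Delta^h+\Ric^\sharp$ appear and are matched via symmetry of $\Ric$). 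A final spatial integration by parts, $\int_M\trace\langle\nabla_\cdot u,\nabla_\cdot v\rangle=-\int_M\langle\square u,v\rangle$, flips the sign on the $\nu\square$ term and yields the weak form of \eqref{E12}; the pressure then enters precisely as you describe, via the Hodge/Leray argument.

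Your deterministic integration-by-parts alternative would also work, once you use volume preservation of $g_u(t)$ to replace $\EE\int_M(\cdots)(t,g_u(t)(x))\,dx$ by the deterministic $\int_M(\cdots)(t,y)\,dy$ and then integrate $\int_0^T\int_M\langle u,\partial_tv+\nabla_uv+\nu\square v\rangle$ classically. So on the left side your plan is a legitimate---arguably shorter---variant of the paper's argument; the genuine gap is only the unverified claim about how $(D_R)_hS$ behaves.
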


For the construction of weak  solutions of Navier-Stokes equations on Riemannian manifolds we refer to
\cite{Nagasawa:97}.
\begin{proof}

 Since the functional $S$ is right invariant, it is enough to consider the left derivative. 
So we need to compute 
\begin{equation}
 \label{E22}
\f{d}{d\e}\vert_{\e=0} S(e_\cdot(\e v)(g_u)).
\end{equation}
We let 
\begin{equation}
 \label{E23}
f(\e)=S(e_\cdot(\e v)(g_u)).
\end{equation}
Then 
\begin{equation}
 \label{E24}
f(\e)=\f12\int_M\left(\EE\left[\int_0^T\left(\left\|De_t(\e v)(g_u)(t)(x)\right\|^2\right)\,dt\right]\right)\,dx
\end{equation}
which yields 
\begin{equation}
 \label{E26}
f'(0)=\int_M\left(\EE\left[\int_0^T\left(\left\langle \n_\e\vert_{\e=0}De_t(\e v)\left( g_u(t)(x)\right), u(t,g_u(t)(x))\right\rangle\right)\,dt\right]\right)\,dx.
\end{equation}

We need to compute 
\begin{equation}
 \label{E25}
\n_\e\vert_{\e=0}De_t(\e v)\left( g_u(t)(x)\right).
\end{equation}
We have 
\begin{align*}
 \n_t\f{d}{d\e}\vert_{\e=0}e_t(\e v)&=\n_\e\vert_{\e=0}\f{de_t(\e v)}{dt}&\\
&=\n_\e\vert_{\e=0} \e\dot v(t,e_t(\e v))\\
&=\dot v(t,e).
\end{align*}
Together with $v(0,\cdot)=0$, this implies
\begin{align}
 \label{E13}
\f{d}{d\e}\vert_{\e=0}e_t(\e v)(x)=v(t,x).
\end{align}
Consequently 
\begin{equation}
 \label{E14}
\f{d}{d\e}\vert_{\e=0}e_t(\e v)\left( g_u(t)(x)\right)=v\left(t, g_u(t)(x)\right).
\end{equation}
By It\^o equation,
 \begin{align}
  \label{E15}
\begin{split}
&de_t(\e v)(g_u(t)(x))\\&=\langle de_t(\e v)(\cdot),\ dg_u(t)(x)\rangle+\f12 \n de_t(\e v)(g_u(t)(x))\left(dg_u(t)(x)\otimes dg_u(t)(x)\right)\\
&=\langle de_t(\e v)(\cdot),\ dg_u(t)(x)\rangle+ \nu \D e_t(\e v)(g_u(t)(x))\,dt.
\end{split}
 \end{align}
Here $\D e_t(\e v)(\cdot)$ denotes the tension field of the map $e_t(\e v) : M\to M$.
This yields
\begin{align}
  \label{E16}
\begin{split}
De_t(\e v)(g_u(t)(x))&=\langle de_t(\e v)(\cdot),\ Dg_u(t)(x)\rangle+ \nu \D e_t(\e v)(g_u(t)(x))\\&\qquad+\e \dot v(t,e_t(\e v)(g_u(t)(x))) \\
&=\langle de_t(\e v)(\cdot),\ u(t,g_u(t)(x))\rangle+ \nu \D e_t(\e v)(g_u(t)(x))\\&\qquad+\e \dot v(t,e_t(\e v)(g_u(t)(x))).
\end{split}
 \end{align}
Differentiating with respect to $\e$ at $\e=0$, we get

\begin{align}
  \label{E17}
\begin{split}
&\n_\e\vert_{\e=0}De_t(\e v)(g_u(t)(x))\\&=\left\langle\n_\e\vert_{\e=0}de_t(\e v)(\cdot),\ u(t,g_u(t)(x))\right\rangle
+\nu \n_\e\vert_{\e=0}\D e_t(\e v)(g_u(t)(x))\\
&\qquad +\f{\partial v}{\partial t}(t,g_u(t)(x))\\
&=\left\langle\n_\cdot\f{d}{d\e}\vert_{\e=0}e_t(\e v)(\cdot),\ u(t,g_u(t)(x))\right\rangle
+\nu \square \f{d}{d\e}\vert_{\e=0}e_t(\e v)(g_u(t)(x))\\
&\qquad +\f{\partial v}{\partial t}(t,g_u(t)(x))\\
&=\left\langle\n_\cdot v(t,\cdot),\ u(t,g_u(t)(x))\right\rangle
+\nu \square v(t,\cdot) (g_u(t)(x)) +\f{\partial v}{\partial t}(t,g_u(t)(x))\\
&=\n_{u(t,g_u(t)(x))} v(t,\cdot)+\nu \square v(t,\cdot) (g_u(t)(x)) +\f{\partial v}{\partial t}(t,g_u(t)(x)).
\end{split}
 \end{align}
We used the commutation formula $\n_\e\vert_{\e=0}\D=\square \f{d}{d\e}$, where $\square=dd^\ast +d^\ast d$ is the damped Laplacian. Alternatively, \begin{equation}\label{E33}\square v=\D^h v+\Ric^\sharp(v).\end{equation}

For a $TM$-valued semimartingale $J_t$ which projects onto the $M$-valued semimartingale $Y_t$, we denote by $\SD J_t$ the It\^o covariant derivative:
\begin{equation}
 \label{E18}
\SD J_t=P(Y)_td\left( P(Y)_t^{-1} J_t\right).
\end{equation}
Then It\^o equation yields 
\begin{equation}
 \label{E19}
\SD u(t,g_u(t)(x))\simeq \f{\partial u}{\partial t}(t, g_u(t)(x))\,dt+ \n_{dg_u(t)(x)}u+\nu \D^h u(t,g_u(t)(x))\,dt
\end{equation}
and 
\begin{equation}
 \label{E20}
\SD v(t,g_u(t)(x))\simeq \f{\partial v}{\partial t}(t, g_u(t)(x))\,dt+ \n_{dg_u(t)(x)}v+\nu \D^h v(t,g_u(t)(x))\,dt.
\end{equation}
where the notation $\simeq$ means equal up to a martingale: 
\begin{align*}
&\int_0^\cdot P(g_u(\cdot))_t^{-1}\SD u(t,g_u(t)(x))\\&-\int_0^\cdot P(g_u(\cdot))_t^{-1}\left(\f{\partial u}{\partial t}(t, g_u(t)(x))\,dt+ \n_{dg_u(t)(x)}u+\nu \D^h u(t,g_u(t)(x))\,dt\right)
\end{align*}
 is a local
martingale.

On the other hand, denoting  $u_t=u(t,g_u(t)(x))$ and $v_t=v(t,g_u(t)(x))$ we have 
\begin{equation}
 \label{E21}
\langle u_T, v_T\rangle=\int_0^T\langle \SD u_t,\ v_t\rangle +\int_0^T\langle u_t,\ \SD v_t\rangle+\int_0^T\langle \SD u_t,\ \SD v_t\rangle.
\end{equation}
Let us denote by $Dv_t$  the drift of $v_t$ with respect to the damped connection $\n^c$ on $TM$, whose geodesics are the Jacobi fields. 
It is known that, 
\begin{equation}
 \label{E30}
\left(D u_t-\nu \Ric^\sharp(u_t)\right)\,dt \quad\hbox{ is the drift of}\quad \SD u_t
\end{equation}
 and 
\begin{equation}
 \label{E31}
\left(D v_t-\nu \Ric^\sharp(v_t)\right)\,dt \quad\hbox{ is the drift of}\quad  \SD v_t.
\end{equation}
As can be seen from \eqref{E17}, \eqref{E20} and \eqref{E31}, the drift 
$Dv_t$ commutes with the derivative with respect to a parameter, so it satisfies 
\begin{equation}
 \label{E29}
D v_t=\n_\e\vert_{\e=0}D e_t(\e v)(g_u(t)(x)). 
\end{equation}

Taking the expectation in~\eqref{E21} and using~\eqref{E29},~\eqref{E30} and~\eqref{E31}, we get by removing the martingale parts 
\begin{align}
 \label{E27}
\begin{split}
\EE\left[\langle u_T, v_T\rangle\right]&=\EE\left[\int_0^T\langle \f{\partial u}{\partial t}(t, g_u(t)(x))+ \n_{u_t}u+\nu \D^h u(t,g_u(t)(x)),\ v_t\rangle\,dt\right]\\&+ \EE\left[\int_0^T\langle u_t,\  \n_\e\vert_{\e=0}D e_t(\e v)(g_u(t)(x))-\nu \Ric^\sharp(v_t)\rangle\,dt\right]\\
&+\EE\left[2\nu \int_0^T \trace\left\langle\n_\cdot u,\ \n_\cdot v \right\rangle(t,g_u(t)(x))\,dt\right].
\end{split}
\end{align}
Then using the facts that $v_T=0$, together with
\begin{equation}
 \label{E32}
\langle u_t, \Ric^\sharp(v_t)\rangle=\langle \Ric^\sharp(u_t), v_t\rangle
\end{equation}
and~\eqref{E33}, we get 
\begin{align}
 \label{E34}
\begin{split}
 &\EE\left[\int_0^T\langle u_t,\  \n_\e\vert_{\e=0}D e_t(\e v)(g_u(t)(x))\rangle\,dt\right]\\
&=-\EE\left[\int_0^T\langle \f{\partial u}{\partial t}(t, g_u(t)(x))+ \n_{u_t}u+\nu \square u(t,g_u(t)(x)),\ v_t\rangle\,dt\right]\\&
-\EE\left[2\nu \int_0^T \trace\left\langle\n_\cdot u_t,\ \n_\cdot v_t \right\rangle(t,g_u(t)(x))\,dt\right].
\end{split}
\end{align}
Integrating with respect to $x$ yields
\begin{align}
\label{E35}
\begin{split}
 &f'(0)\\
&=-\EE\left[\int_0^T\left(\int_M\left\langle \left(\left(\f{\partial }{\partial t}+ \n_{u}+\nu \square\right)u\right)(t,g_u(t)(x)),\ v(t,g_u(t)(x))\right\rangle\,dx\right)\,dt\right]\\&
-\EE\left[2\nu \int_0^T \left(\int_M\trace\left\langle\n_\cdot u,\ \n_\cdot v \right\rangle(t,g_u(t)(x))\,dx\right)\,dt\right].
\end{split}
\end{align}
Now we use the fact that  $g_u(t)(\cdot)$ is volume preserving:
\begin{align}
\label{E36}
\begin{split}
 &f'(0)\\
&=-\EE\left[\int_0^T\left(\int_M\left\langle \left(\left(\f{\partial }{\partial t}+ \n_{u}+\nu \square\right)u\right)(t,x),\ v(t,x)\right\rangle\,dx\right)\,dt\right]\\&
-\EE\left[2\nu \int_0^T \left(\int_M\trace\left\langle\n_\cdot u,\ \n_\cdot v \right\rangle(t,x)\,dx\right)\,dt\right].
\end{split}
\end{align}
Since $M$ is compact and orientable,  an integration by parts gives
\begin{equation}
 \label{E37}
\int_M\trace\left\langle\n_\cdot u,\ \n_\cdot v \right\rangle(t,x)\,dx=-\int_M\left\langle\square u, v \right\rangle(t,x)\,dx.
\end{equation}
Replacing in~\eqref{E36} we get 
\begin{align}
\label{E38}
\begin{split}
 &f'(0)=-\EE\left[\int_0^T\left(\int_M\left\langle \left(\left(\f{\partial }{\partial t}+ \n_{u}-\nu \square\right)u\right)(t,x),\ v(t,x)\right\rangle\,dx\right)\,dt\right].
\end{split}
\end{align}
 The process $g_u(t)$ is a critical point of the energy functional $S$ if and only if $f'(0)=0$, which by equation~\eqref{E38} is equivalent to 
\begin{equation}
 \label{E39}
\left(\f{\partial }{\partial t}+ \n_{u}-\nu \square\right)u=-\n p
\end{equation}
for some function $p$ on $[0,T]\times M$. This achieves the proof.
\end{proof}

\section{A martingale characterization for solutions of Navier-Stokes equations}\label{Section3}
\setcounter{equation}0

In this section, to simplify the equations,  we assume the pressure to be constant.
 The pressure will
not be present, in any case, in the weak version of the formulae we derive.

We seek to obtain a formula for the drift of the covariant derivative with respect to a parameter of a family of Navier-Stokes solutions, extending the well-known Jacobi equation.

 Consider a family of diffusions $g^\a$, $\a\in \RR$, satisfying 
\begin{equation}
 \label{E40}
g^\a(0)=\varphi(\a)
\end{equation}
where $\varphi : \RR\to M$ is a smooth path on $M$,
and solution to the It\^o SDE
\begin{equation}
 \label{E41}
dg^\a(t)=u(t,g^\a(t))\,dt+\s (g^\a(t))\,dB_t
\end{equation}
where $u$ solves
\begin{equation}
 \label{E42}
\partial_t u+\n_uu+\nu\square u=0,
\end{equation}
$B_t=(B_t^\ell)_{\ell\ge 0}$ is a family of real Brownian motions, $\s=(\s_\ell)_{\ell\ge 0}$,  and for all $\ell\ge 0$, $\s_\ell$ is a vector field on $M$. We furthermore assume that 
\begin{equation}
 \label{E43}
\s\s^\ast=\nu g^{-1}
\end{equation}
where $g$ is the Riemannian metric on $M$.

We denote by $u_t^\a=Dg^\a(t)=u(t,g^\a(t))$ the drift of $g^\a$. We denote by $\SD^c J_t$ the vertical part of the It\^o differential  (with respect to $\n^c$) of a $TM$-valued semimartingale $J_t$. It is known that 
\begin{align}
 \label{E45}
\SD^c J_t= \SD J_t+\f12R(J_t, dX_t)dX_t
\end{align}
where $X_t=\pi(J_t)$ and $R$ is the curvature tensor. If $J_t$ has an absolutely continuous drift $D^cJ_t$, then the finite variation part of $\SD^c J_t$ is $D^cJ_t\,dt$.

  From the It\^o equation 
\begin{equation}
 \label{E44}
\SD^c u_t^\a \simeq \partial_t u(t, g^\a(t))\,dt+\n_{dg^\a(t)}u+\nu \square u(t, g^\a(t))\,dt
\end{equation}
we deduce that the drift of $\SD^c u_t^\a$ is 
\begin{equation}
 \label{E47}
D^c u_t^\a=\partial_t u(t, g^\a(t))+\n_{u_t^\a}u+\nu \square u(t, g^\a(t))=0.
\end{equation}

>From \cite{Arnaudon-Thalmaier:03} Theorem~4.5, we have formally
\begin{align}
 \label{E60}
\begin{split}
&\SD \n_\a u_t^\a\\=&\n_\a\SD u_t^\a+R(dg^\a(t), \partial_\a g^\a(t))u_t^\a\\&+ R(dg^\a(t), \partial_\a g^\a(t))\SD u_t^\a
-\nu d^\ast R(\partial_\a g^\a(t))u_t^\a +\f12 R(dg^\a(t), \SD\partial_\a g^\a(t))u_t^\a.
\end{split}
\end{align}
Using \eqref{E45}, we obtain
\begin{align}
 \label{E61}
\begin{split}
&\SD^c \n_\a u_t^\a\\=&\n_\a\SD^c u_t^\a+R(dg^\a(t), \partial_\a g^\a(t))u_t^\a-\nu\n_{\partial_\a g^\a(t)}\Ric^\sharp (u_t^\a)\,dt
\\&+ R(dg^\a(t), \partial_\a g^\a(t))\SD u_t^\a
-\nu d^\ast R(\partial_\a g^\a(t))u_t^\a\,dt +\f12 R(dg^\a(t), \SD\partial_\a g^\a(t))u_t^\a.
\end{split}
\end{align}
Removing the martingale part we obtain the drift 
\begin{align}
 \label{E62}
\begin{split}
D^c \n_\a u_t^\a=&\n_\a D^c u_t^\a+R(u_t^\a, \partial_\a g^\a(t))u_t^\a-\nu\n_{\partial_\a g^\a(t)}\Ric^\sharp (u_t^\a)
\\&+ 2\nu \trace R(\cdot , \partial_\a g^\a(t))\n_\cdot  u_t^\a
-\nu d^\ast R(\partial_\a g^\a(t))u_t^\a +\nu \trace R(\s(\cdot) , \n_{\partial_\a  g^\a(t)}\s(\cdot))u_t^\a.
\end{split}
\end{align}
Now since $D u_t^\a=0$ we finally get 
\begin{prop}
\label{P1}
The drift of the covariant derivative with respect to $\a$ of the family $(u_t^\a)_{\a\in\RR}$ of Navier-Stokes solutions is given by
\begin{align}
 \label{E63}
\begin{split}
D^c \n_\a u_t^\a=&R(u_t^\a, \partial_\a g^\a(t))u_t^\a-\nu\n_{\partial_\a g^\a(t)}\Ric^\sharp (u_t^\a)
\\&+ 2\nu \trace R(\cdot , \partial_\a g^\a(t))\n_\cdot  u_t^\a
-\nu d^\ast R(\partial_\a g^\a(t))u_t^\a +\nu \trace R(\s(\cdot) , \n_{\partial_a  g^\a(t)}\s(\cdot))u_t^\a.
\end{split}
\end{align}
\end{prop}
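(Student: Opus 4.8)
The plan is to differentiate the Navier--Stokes identity with respect to the parameter $\a$ and to commute this parameter derivative past the It\^o covariant derivative taken along the diffusion $g^\a$, keeping careful track of all curvature corrections; the asserted formula is precisely the list of those corrections. I would begin by observing that, since $u$ solves \eqref{E42} and $u_t^\a=u(t,g^\a(t))$ is by construction the drift of $g^\a$, the It\^o formula for the damped covariant differential $\SD^c u_t^\a$ along $g^\a$ gives \eqref{E44}, whose finite-variation part is $\left(\partial_t u+\n_{u_t^\a}u+\nu\square u\right)(t,g^\a(t))$; by \eqref{E42} this vanishes, i.e.\ $D^c u_t^\a=0$, which is \eqref{E47}. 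So everything reduces to understanding how $\SD^c$ interacts with $\n_\a$ on the $TM$-valued semimartingale $u_t^\a$.

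The key input is the commutation formula of Arnaudon--Thalmaier (\cite{Arnaudon-Thalmaier:03}, Theorem~4.5), which expresses $\SD\n_\a u_t^\a$ as $\n_\a\SD u_t^\a$ plus the curvature terms displayed in \eqref{E60}: the term $R(dg^\a(t),\partial_\a g^\a(t))u_t^\a$ coming from the torsion-freeness of $\n$, the It\^o-type term $R(dg^\a(t),\partial_\a g^\a(t))\SD u_t^\a$, the term $-\nu\, d^\ast R(\partial_\a g^\a(t))u_t^\a$ produced by antisymmetrising the second-order part, and $\tfrac12 R(dg^\a(t),\SD\partial_\a g^\a(t))u_t^\a$. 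Passing from $\SD$ to $\SD^c$ on both sides via \eqref{E45}, which on the left-hand side costs the damping term $-\nu\n_{\partial_\a g^\a(t)}\Ric^\sharp(u_t^\a)\,dt$, then produces \eqref{E61}.

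Next I would take finite-variation parts in \eqref{E61}: $\n_\a\SD^c u_t^\a$ contributes $\n_\a D^c u_t^\a$, the martingale increment $dg^\a(t)$ occurring in $R(dg^\a(t),\partial_\a g^\a(t))u_t^\a$ is replaced by the drift $u_t^\a\,dt$, and the increments that are paired against other martingale differentials (in the $\SD u_t^\a$ term, inside $\SD\partial_\a g^\a(t)$, and in the first curvature slot) contribute their quadratic covariations, which by the nondegeneracy assumption \eqref{E43} equal $\nu g^{-1}$; this yields the traces $2\nu\trace R(\cdot,\partial_\a g^\a(t))\n_\cdot u_t^\a$ and $\nu\trace R(\s(\cdot),\n_{\partial_\a g^\a(t)}\s(\cdot))u_t^\a$ and gives \eqref{E62}. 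Finally, substituting $D^c u_t^\a=0$ from \eqref{E47}, so that $\n_\a D^c u_t^\a=0$, leaves exactly \eqref{E63}.

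The main obstacle is the commutation formula \eqref{E60} itself: correctly exchanging $\SD$ with $\n_\a$ demands delicate bookkeeping of the curvature term, the It\^o correction involving $\SD u_t^\a$, and the $d^\ast R$ contribution, and this is precisely what one borrows from \cite{Arnaudon-Thalmaier:03}. The remaining steps are routine but attention-demanding: the conversion between the Levi-Civita connection and the damped connection $\n^c$ through \eqref{E45}, and the identification of the quadratic-variation traces using \eqref{E43}.
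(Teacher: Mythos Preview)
Your proposal is correct and follows essentially the same route as the paper: the paper's argument is exactly the sequence \eqref{E47}$\to$\eqref{E60}$\to$\eqref{E61}$\to$\eqref{E62}$\to$\eqref{E63}, invoking \cite{Arnaudon-Thalmaier:03} Theorem~4.5 for the commutation formula and \eqref{E45} for the passage from $\SD$ to $\SD^c$, and you have reproduced each step. The only cosmetic difference is that the paper presents the chain of equalities \eqref{E44}--\eqref{E62} before stating the proposition, whereas you narrate the same chain after.
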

 This formula extends the well known corresponding (Jacobi) equation for the variation of geodesics.

\section{The two-dimensional torus endowed with the Euclidean distance}\label{Section6}
\setcounter{equation}0
 We study the evolution in time of the $L^2$ distance between two particles in the two dimensional torus. Notice that, in order to interpret the diffusion processes as a solution of the variational principle described in section 2, there is no canonical choice for the Brownian motion, as far as
it corresponds to the same generator. We  make here a particular choice.

On the two-dimensional torus $\TT=\RR/2\pi\ZZ\times \RR/2\pi\ZZ$ we consider the following vector fields

$$A_k (\theta )=(k_2 ,-k_1 )\cos k.\theta ,\quad B_k (\theta )=(k_2 ,-k_1 )\sin k.\theta$$
 and the Brownian motion

\begin{equation}
 \label{E67}
dW(t)=\sum_{k \in \ZZ }{\l_k\sqrt \nu}(A_k dx_k +B_k dy_k )
\end{equation}
 where $x_k ,y_k $ are independent copies of real Brownian motions.
We assume that $\sum_k |k|^2 \l_k^2 <\infty$, a necessary and sufficient condition for the Brownian flow to be defined in $L^2 (\TT )$. Furthermore we consider
$\lambda_k =\lambda (|k|)$ to be nonzero for a equal number of $k_1$ and $k_2$
components. In this case the generator of the process is equal to
$$L_u =C \nu \Delta +\f{\partial}{\partial t} +\partial_u$$
with $2C=\sum_k \lambda_k^2$ (c.f.\cite{Cipriano-Cruzeiro:07} Theorem 2.2). We shall assume $C$ to be equal to one.
Let us take two Lagrangian stochastic trajectories starting from different diffeomorphisms and let us write 
\begin{equation}
\label{E68} 
dg_t =(odW(t))+u(t, g_t )dt,\qquad d\tilde g_t =(odW(t))+u(t, \tilde g_t )dt
\end{equation}
with 

$$g_0 =\phi ,\qquad\tilde g_0 =\psi, \qquad \phi\not=\psi$$
We consider the $L^2$ distance of the particles defined by
$$\rho^2 (\phi ,\psi )= \int_{\mathbb T} |\phi (\theta )-\psi (\theta )|^2 \,d\theta .$$
where $d\theta$ stands for the normalized Lebesgue measure on the torus.

We let $\rho_t=\rho (g_t, \tilde g_t)$ and  $\tau (g , \tilde g )= \inf \{ t>0:  \rho_t =0\}$.

\begin{lemma}
 \label{L1}
The stopping time $\tau (g , \tilde g )$ is  infinite.
\end{lemma}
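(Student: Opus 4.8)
The plan is to derive a stochastic differential equation (or at least an inequality) for $\rho_t^2$, and show that the coefficients are such that $0$ cannot be reached in finite time. First I would apply the Itô formula to $\rho_t^2=\int_{\TT}|g_t(\theta)-\tilde g_t(\theta)|^2\,d\theta$. Writing $\xi_t(\theta)=g_t(\theta)-\tilde g_t(\theta)$, the Stratonovich noise terms from \eqref{E68} must be converted to Itô form; since both $g_t$ and $\tilde g_t$ are driven by the \emph{same} Brownian motion $dW(t)$, the noise entering $d\xi_t$ is $\sum_k\l_k\sqrt\nu\bigl((A_k(g_t)-A_k(\tilde g_t))dx_k+(B_k(g_t)-B_k(\tilde g_t))dy_k\bigr)$, together with drift contributions $(u(t,g_t)-u(t,\tilde g_t))dt$ and the Itô correction coming from the second derivatives of $A_k,B_k$. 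The crucial point is that when $g_t(\theta)$ and $\tilde g_t(\theta)$ coincide, the martingale part of $d\xi_t(\theta)$ vanishes, so the noise is ``Lipschitz in $\xi$'' in an appropriate $L^2$ sense; likewise $u$ is smooth, hence Lipschitz, so $|u(t,g_t(\theta))-u(t,\tilde g_t(\theta))|\le K|\xi_t(\theta)|$.

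Next I would integrate over $\theta$ and assemble the Itô differential of $\rho_t^2$. The martingale part will be of the form $dM_t$ with quadratic variation $d\langle M\rangle_t\le C_1\rho_t^2\,(\rho_t^2)\,dt$ — more precisely, since each noise coefficient is bounded by $\const\cdot|\xi_t(\theta)|$ and is paired with $\xi_t(\theta)$, the integrand of $d\langle M\rangle_t$ is $O(\rho_t^4)$ — while the drift part (the $u$-difference term plus the Itô correction term, using $\sum_k|k|^2\l_k^2<\infty$ and $2C=\sum_k\l_k^2$ with $C=1$ so that the second-order noise generator is exactly $\nu\Delta$ acting on the difference) will be bounded above by $C_2\rho_t^2\,dt$. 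Thus $\rho_t^2$ satisfies $d\rho_t^2\le C_2\rho_t^2\,dt+dM_t$ with $\langle M\rangle$ controlled by $\int_0^t\rho_s^4\,ds$. Set $Y_t=\log\rho_t^2$ (valid for $t<\tau$): by Itô, $dY_t=\rho_t^{-2}d\rho_t^2-\tfrac12\rho_t^{-4}d\langle M\rangle_t$, so the drift of $Y_t$ is bounded above by $C_2-\tfrac12 C_1$ (a \emph{constant}, because the $\rho_t^{-4}$ exactly cancels the $\rho_t^4$ in the quadratic variation), and the martingale part of $Y_t$ has bounded quadratic-variation rate $\rho_t^{-4}d\langle M\rangle_t\le C_1\,dt$. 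Hence on $[0,\tau)$, $Y_t$ is dominated by a process with bounded drift and bounded-rate martingale part, so $Y_t$ cannot tend to $-\infty$ in finite time: $\liminf_{t\to\tau}Y_t>-\infty$ on $\{\tau<\infty\}$, contradicting the definition of $\tau$ unless $\tau=\infty$. Therefore $\tau(g,\tilde g)=\infty$ a.s.

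The main obstacle I anticipate is the rigorous estimate of the quadratic variation of the noise after conversion to Itô form, and in particular verifying that the noise coefficient for $\xi_t(\theta)$ really is bounded by a constant times $|\xi_t(\theta)|$ uniformly, and that the Itô--Stratonovich correction contributes at most $C_2\rho_t^2\,dt$ to the drift rather than something worse. This requires using the explicit structure of $A_k,B_k$ — they are trigonometric polynomials with $|A_k(\theta)-A_k(\tilde\theta)|\le|k|\,|k|\,|\theta-\tilde\theta|$ type bounds — combined with the summability $\sum_k|k|^2\l_k^2<\infty$ (or a slightly stronger moment, which may need to be assumed or checked to be implied by the flow being $C^1$); this is exactly where the choice of Brownian motion made at the start of the section, with the symmetry assumption on the $\l_k$ giving generator $\nu\Delta$, is used to keep the drift term clean. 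A secondary technical point is to justify working with $\log\rho_t^2$ up to the stopping time $\tau$ and passing to the limit $t\uparrow\tau$; this is handled by a standard localization argument together with the non-explosion of the bounding process.
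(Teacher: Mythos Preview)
Your approach is workable in spirit but takes a very different, and much heavier, route than the paper. The paper's proof is a two-line pathwise argument: since $g_t$ and $\tilde g_t$ solve the \emph{same} SDE (same noise, same drift field $u$) with initial conditions $\phi$ and $\psi$, strong uniqueness gives $\tilde g_t(\theta)=g_t\bigl(\phi^{-1}\circ\psi(\theta)\bigr)$ for all $t$; since $g_t$ is a diffeomorphism (in particular injective), $\phi(\theta)\neq\psi(\theta)$ implies $g_t(\theta)\neq\tilde g_t(\theta)$, and as $\phi\neq\psi$ on a set of positive measure one gets $\rho_t>0$ for every $t$. No It\^o calculus, no estimates, no moment conditions beyond those needed for the flow to exist.

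By contrast, your analytic argument would need two ingredients that go beyond the standing hypotheses. First, the bound $d\langle M\rangle_t\le C_1\rho_t^4\,dt$ (and hence the boundedness of the quadratic-variation rate of $\log\rho_t^2$) requires $\sum_k\l_k^2|k|^4<\infty$, strictly stronger than the paper's assumption $\sum_k\l_k^2|k|^2<\infty$; you note this yourself, but it is a genuine extra hypothesis rather than a technicality. Second, because the torus distance is used, the It\^o formula for $\rho_t^2$ carries a negative local-time term supported where $(g_t(\theta),\tilde g_t(\theta))$ hits the cut locus (this is exactly the $dL_t(\theta)$ appearing later in Proposition~\ref{P2}); this feeds a negative contribution into the drift of $\log\rho_t^2$ that is \emph{not} a priori bounded below, and your outline does not address how to control it. What your approach does buy, if these points are handled, is a quantitative lower bound on how fast $\rho_t$ can shrink---information that the paper extracts later anyway via the explicit SDE in Proposition~\ref{P2}. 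For the bare statement of the lemma, however, the flow-property argument is both simpler and strictly more general.
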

\begin{proof}  
By uniqueness of the solution of the sde for $\tilde g_t$ we can let for all $t>0$ $\tilde g_t(\theta )=g_t((\phi^{-1}\circ \psi)(\theta ))$. Since $g_t$, $\varphi$ and $\psi$ 
are diffeomorphisms, if $\varphi(\theta )\not=\psi(\theta )$ then 
$ g_t(\theta )\not=g_t((\phi^{-1}\circ \psi)(\theta ))$.

 Since $\phi \neq \psi $, the set $\di \left\{\theta\in \TT,\  \tilde g_t(\theta)\not=g_t(\theta)\right\}$ has positive measure and this implies that $\rho_t>0$, which in turn implies that $\tau (g, \tilde g)$ is infinite.
\end{proof}

Denote by $L_t(\theta)$ the local time of the process $|g_t(\theta)-\tilde g_t(\theta)|$ when $(g_t(\theta),\tilde g_t(\theta))$ reaches the cutlocus of $\TT$.
By It\^o calculus we have
\begin{align*}
 d\rho_t=&\f1{\rho_t}\sum_k \l_k \sqrt \nu \left\langle g_t-\tilde g_t, \left(A_k(g_t)-A_k(\tilde g_t)\right)dx_k(t)+\left(B_k(g_t)-B_k(\tilde g_t)\right)dy_k(t)\right\rangle_\TT
\\&+\f1{\rho_t}\left\langle g_t-\tilde g_t, u(t,g_t)-u(t,\tilde g_t)\right\rangle_\TT\,dt -\f1{\rho_t}\int_\TT|g_t-\tilde g_t|(\theta)dL_t(\theta)\\
&+\f1{2\rho_t}\sum_k \l_k^2  \nu\left(\left\|A_k(g_t)-A_k(\tilde g_t)\right\|_\TT^2+\left\|B_k(g_t)-B_k(\tilde g_t)\right\|_\TT^2\right)\,dt\\
&-\f1{2\rho_t^3}\sum_k \l_k^2  \nu\left(\left\langle g_t-\tilde g_t, A_k(g_t)-A_k(\tilde g_t)\right\rangle_\TT^2+\left\langle g_t-\tilde g_t, B_k(g_t)-B_k(\tilde g_t)\right\rangle_\TT^2\right)\,dt
\end{align*}
where $\langle \cdot,\cdot\rangle_\TT$ and $\|\cdot\|_\TT$ denote, resp., the $L^2$ inner product and norm. We let 
\begin{equation}
 \label{E70}
\d u(t)=\f1{\rho_t}\left(u(t,g_t)-u(t,\tilde g_t)\right).
\end{equation}
We have 
\begin{equation}
 \label{E72}
\ A_k(g_t)-A_k(\tilde g_t)=-2\sin\f{k\cdot (g_t+\tilde g_t)}{2}\sin\left(\f{k\cdot (g_t-\tilde g_t)}{2}\right)k^{\bot},
\end{equation}
\begin{equation}
 \label{E73}
\ B_k(g_t)-B_k(\tilde g_t)=2\cos\f{k\cdot (g_t+\tilde g_t)}{2}\sin\left(\f{k\cdot (g_t-\tilde g_t)}{2}\right)k^{\bot},
\end{equation}
where we have noted $k^{\bot} =(k_2 , -k_1 )$.
Then, for $k\neq 0$ we let 
\begin{equation}
 \label{E74}
n_k=\f{k}{|k|}, \qquad \hbox{and}\qquad n_g(t)=\f1{\rho_t}(g_t-\tilde g_t).
\end{equation}
This yields
\begin{equation}
 \label{E72a}
\ A_k(g_t)-A_k(\tilde g_t)=-2|k|^2\rho_t \sin\f{k\cdot (g_t+\tilde g_t)}{2}\f{\sin}{|k|\rho_t}\left(\f{k\cdot (g_t-\tilde g_t)}{2}\right)n_{k^{\bot}},
\end{equation}
\begin{equation}
 \label{E72b}
\ B_k(g_t)-B_k(\tilde g_t)=2|k|^2\rho_t (\cos\f{k\cdot (g_t+\tilde g_t)}{2}\f{\sin}{|k|\rho_t}\left(\f{k\cdot (g_t-\tilde g_t)}{2}\right)n_{k^{\bot}}.
\end{equation}
With these notations we get
\begin{align*}
 &d\rho_t\\&=\rho_t\sqrt \nu\sum_k \l_k |k|^2  \int_\TT2\left(n_{k^\bot}\cdot n_g(t,\theta)\right)\f{\sin}{|k|\rho_t}\left(\f{k\cdot (g_t(\theta)-\tilde g_t(\theta))}{2}\right)\,\\&\times\left(-\sin\f{k\cdot (g_t(\theta)+\tilde g_t(\theta))}{2}dx_k(t)+\cos\f{k\cdot (g_t(\theta)+\tilde g_t(\theta))}{2} dy_k(t)\right)\, d\theta\\
&+\rho_t\left\langle n_g(t), \d u(t)\right\rangle_\TT\,dt-\rho_t\int_\TT|n_g(t,\theta)|\f1{\rho_t}dL_t(\theta)\\
&+2\nu\rho_t\sum_k \l_k^2 |k|^4  \left\|\f{\sin}{|k|\rho_t}\left(\f{k\cdot (g_t-\tilde g_t)}{2}\right)\right\|^2_{\TT}\,dt\\
&-2\nu\rho_t\sum_k\l_k^2 |k|^4 \\&\times\left(\int_\TT\left(n_{k^\bot}\cdot n_g(t,\theta)\right)\sin\left(\f{k\cdot (g_t(\theta)+\tilde g_t(\theta))}{2}\right)\f{\sin}{|k|\rho_t}\left(\f{k\cdot (g_t(\theta)-\tilde g_t(\theta))}{2}\right)
\,d\theta\right)^2\,dt\\
&-2\nu\rho_t\sum_k\l_k^2 |k|^4 \\&\times\left(\int_\TT\left(n_{k^\bot}\cdot n_g(t,\theta)\right)\cos\left(\f{k\cdot (g_t(\theta)+\tilde g_t(\theta))}{2}\right)\f{\sin}{|k|\rho_t}\left(\f{k\cdot (g_t(\theta)-\tilde g_t(\theta))}{2}\right)
\,d\theta\right)^2\,dt.
\end{align*}
And finally:
\begin{prop}
\label{P2}
The It\^o equation for the distance $\rho_t$ between the diffeomorphisms $g_t$ and $\tilde g_t$ is given by
\begin{equation}
 \label{E76}
d\rho_t=\rho_t\left(\s_tdz_t+b_t\,dt+\left\langle n_g(t), \d u(t)\right\rangle_\TT\,dt -da_t \right)
\end{equation}
where $z_t$ is a real valued Brownian motion, $\s_t>0$ is given by
\begin{equation}
 \label{E78}
\begin{split}
\s_t^2=&4\nu\sum_k\l_k^2 |k|^4 \\&\times\left(\int_\TT\left(n_{k^\bot}\cdot n_g(t,\theta)\right)\sin\left(\f{k\cdot (g_t(\theta)+\tilde g_t(\theta))}{2}\right)\f{\sin}{|k|\rho_t}\left(\f{k\cdot (g_t(\theta)-\tilde g_t(\theta))}{2}\right)
\,d\theta\right)^2\\
&+4\nu\sum_k\l_k^2 |k|^4 \\&\times\left(\int_\TT\left(n_{k^\bot}\cdot n_g(t,\theta)\right)\cos\left(\f{k\cdot (g_t(\theta)+\tilde g_t(\theta))}{2}\right)\f{\sin}{|k|\rho_t}\left(\f{k\cdot (g_t(\theta)-\tilde g_t(\theta))}{2}\right)
\,d\theta\right)^2,
\end{split}
\end{equation}
the process $b_t$ satisfies
\begin{equation}
 \label{E78.1}
\begin{split}
b_t+\f12\s_t^2=&2\nu\rho_t\sum_k \l_k^2 |k|^4  \left\|\f{\sin}{|k|\rho_t}\left(\f{k\cdot (g_t-\tilde g_t)}{2}\right)\right\|^2_{\TT}\,dt
\end{split}
\end{equation}
   and $a_t$ is defined by
\begin{equation}
 \label{E78.2}
\begin{split}
a_0=0,\qquad da_t=&\int_\TT|n_g(t,\theta)|\f1{\rho_t}dL_t(\theta).
\end{split}
\end{equation}
\end{prop}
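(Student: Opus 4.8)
The statement simply repackages the It\^o expansion of $d\rho_t$ displayed just above, so the plan is: (i) establish that expansion rigorously, then (ii) collect its terms into the announced form. For (i), fix $\theta$ and regard $t\mapsto|g_t(\theta)-\tilde g_t(\theta)|$ (torus distance) as a real semimartingale: away from the cut locus of $\TT$ the squared distance is a smooth function of its two arguments, so It\^o's formula applies there, while across the cut locus the Meyer--It\^o (Tanaka) formula contributes the local-time term $dL_t(\theta)$. Substituting the dynamics \eqref{E68}, integrating in $\theta$ against $d\theta$, and then applying It\^o's formula to $\rho_t=(\int_\TT|g_t(\theta)-\tilde g_t(\theta)|^2\,d\theta)^{1/2}$ --- legitimate because $\rho_t>0$ for every $t$ by Lemma~\ref{L1}, so the coefficients $1/\rho_t$ and $1/\rho_t^3$ stay locally bounded --- reproduces the long displayed formula, once the explicit identities \eqref{E72a}--\eqref{E72b} for $A_k(g_t)-A_k(\tilde g_t)$ and $B_k(g_t)-B_k(\tilde g_t)$ are inserted and one uses $|n_{k^\bot}|=1$ and $\int_\TT|n_g(t,\theta)|^2\,d\theta=1$.

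For (ii), split that formula into its local-martingale part $N_t$ (the sum of the $dx_k(t)$ and $dy_k(t)$ integrals) and its finite-variation part. Since the $x_k,y_k$ are independent real Brownian motions, computing the quadratic variation gives $d\langle N\rangle_t=\rho_t^2\sigma_t^2\,dt$ with $\sigma_t^2$ exactly as in \eqref{E78}, the overall factor $4$ there being the square of the coefficients $2$ in \eqref{E72a}--\eqref{E72b}. Putting $z_t=\int_0^t\mathbf{1}_{\{\sigma_s>0\}}(\rho_s\sigma_s)^{-1}\,dN_s+\int_0^t\mathbf{1}_{\{\sigma_s=0\}}\,d\bar z_s$, with $\bar z$ an auxiliary one-dimensional Brownian motion, L\'evy's characterization shows that $z$ is a Brownian motion and $N_t=\int_0^t\rho_s\sigma_s\,dz_s$. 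As a sum of squares $\sigma_t^2\ge 0$; its strict positivity (needed only to make $z$ intrinsic, not for the representation) follows from $\rho_t>0$ together with the non-degeneracy of the covariance, which prevents all the $\sin$- and $\cos$-integrals in \eqref{E78} from vanishing simultaneously.

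For the finite-variation part: the $u$-terms collect to $\rho_t\langle n_g(t),\delta u(t)\rangle_\TT\,dt$ with $\delta u$ as in \eqref{E70}; the cut-locus term is $-\rho_t\,da_t$ with $a_t$ as in \eqref{E78.2}; and the remaining $\nu$-drift consists of the positive piece $2\nu\rho_t\sum_k\lambda_k^2|k|^4\,\|\frac{\sin}{|k|\rho_t}(\cdots)\|_\TT^2\,dt$ together with the two pieces arising from the $-1/(2\rho_t^3)$ correction of the It\^o expansion, which add up to precisely $-\frac{1}{2}\rho_t\sigma_t^2\,dt$ by the definition of $\sigma_t^2$. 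Hence this $\nu$-drift equals $\rho_t b_t\,dt$ with $b_t+\frac{1}{2}\sigma_t^2=2\nu\sum_k\lambda_k^2|k|^4\,\|\frac{\sin}{|k|\rho_t}(\cdots)\|_\TT^2$, which is \eqref{E78.1}. Assembling $N_t$ and these three finite-variation contributions yields \eqref{E76}.

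The main obstacle is step (i): establishing the It\^o formula for $\rho_t$ in full rigour, and in particular pinning down the cut-locus local-time term through the Meyer--It\^o formula for the merely Lipschitz (not $C^1$) distance function on $\TT$, together with the justification that the $\theta$-integration and the outer square root may be interchanged with the stochastic calculus --- uniformity in $\theta$ and the summability $\sum_k|k|^2\lambda_k^2<\infty$ make this work. Everything afterwards is essentially bookkeeping: tracking the factors $2$ and $4$, checking the coefficient $\frac{1}{2}$ when the two second-order drift contributions are combined, and reading off $\sigma_t$, $b_t$ and $a_t$ from the regrouped expression.
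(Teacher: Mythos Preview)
Your proposal is correct and follows essentially the same approach as the paper: the paper derives the long It\^o expansion of $d\rho_t$ in the displayed computations preceding the proposition (using the identities \eqref{E72}--\eqref{E72b} and the cut-locus local time $L_t(\theta)$), and then states Proposition~\ref{P2} as the repackaged form, without a separate proof environment. Your write-up is in fact somewhat more explicit than the paper's, since you spell out the L\'evy characterization needed to produce the one-dimensional Brownian motion $z_t$ from the martingale part (the paper simply asserts the existence of $z_t$), and you note the role of Lemma~\ref{L1} in keeping $1/\rho_t$ locally bounded.
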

 So we have for all $0<t_0<t$, 
\begin{align}
 \label{E77}
\rho_t=\rho_{t_0}\exp\left(\int_{t_0}^t\s_s\,dz_s+\int_{t_0}^t\left
(b_s-\f12\s_s^2+\left\langle n_g(s), \d u(s)\right\rangle_\TT\right)\,ds- (a_t- a_{t_0} )\right).
\end{align}
Let 
\begin{equation}\label{deltak}
\d_k=\d_k(t,\theta)=\f{\rho_t(n_g\cdot n_k)}{|g_t(\theta)-\tilde g_t\theta)|}.
\end{equation}
Notice that
$$
\d_k^2+\d_{k^\bot}^2=1.
$$
\begin{lemma}
 \label{L2}
We have 
\begin{equation}
\label{boundsigma}
 \begin{split}
  \s_t^2
&\le4\nu\sum_k \l_k^2 |k|^4\int_\TT \d_{k^\bot}^2\f{\sin^2}{|k|^2\rho_t^2}\left(\f{k\cdot (g_t(\theta)-\tilde g_t(\theta))}{2}\right)d\theta
 \end{split}
\end{equation}
and 
\begin{equation}
 \label{boundbt}
b_t\ge 2\nu\sum_k \l_k^2 |k|^4 \int_\TT(n_g\cdot n_{k})^2\,d\theta\int_\TT \f{\sin^2}{|k|^2\rho_t^2}\left(\f{k\cdot (g_t(\theta)-\tilde g_t(\theta))}{2}\right)d\theta,
\end{equation}
in particular $b_t\ge 0$.

Let $R>0$.
Assuming  that 
  $\l_k=0$ for all $k$ such that   $|k|> R$ then on 
$$
\left\{\om\ |\  \forall\theta\in \TT,\ |g_t(\theta)-\tilde g_t(\theta)|\le\f{\pi}{R}\right\}
$$
we have 
\begin{align*}
b_t-\f12\s_t^2&\ge 0.
\end{align*}
\end{lemma}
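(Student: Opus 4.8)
The plan is to extract the three asserted inequalities from the exact formulas in Proposition \ref{P2} by elementary estimates. For the bound \eqref{boundsigma} on $\s_t^2$, I would start from the expression \eqref{E78} and combine the two terms (the $\sin$ and $\cos$ of $k\cdot(g_t+\tilde g_t)/2$). Each term is of the form $4\nu\sum_k\l_k^2|k|^4\left(\int_\TT f_k(\theta)c_k(\theta)\,d\theta\right)^2$ where $c_k$ is a bounded oscillatory factor and $f_k(\theta)=(n_{k^\bot}\cdot n_g(t,\theta))\frac{\sin}{|k|\rho_t}\!\left(\frac{k\cdot(g_t(\theta)-\tilde g_t(\theta))}{2}\right)$. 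By Cauchy--Schwarz in $L^2(\TT)$ (recalling $d\theta$ is normalized Lebesgue measure), $\left(\int_\TT f_k c_k\right)^2\le \int_\TT f_k^2\cdot\int_\TT c_k^2$, and using $\sin^2+\cos^2=1$ the two $c_k$-integrals sum to $1$. Rewriting $\frac{\sin}{|k|\rho_t}\!\left(\frac{k\cdot(g_t-\tilde g_t)}{2}\right)=\frac{\sin(k\cdot(g_t-\tilde g_t)/2)}{|k|\rho_t}$ and recognizing, via \eqref{deltak}, that $(n_{k^\bot}\cdot n_g)=\d_{k^\bot}$ while $\rho_t^2/|g_t(\theta)-\tilde g_t(\theta)|^2$ appears when one converts $|k|\rho_t$ in the denominator — more precisely $\frac{\sin^2(k\cdot(g_t-\tilde g_t)/2)}{|k|^2\rho_t^2}=\frac{|g_t-\tilde g_t|^2}{\rho_t^2}\cdot\frac{\sin^2(\cdots)}{|k|^2|g_t-\tilde g_t|^2}$, so together with $\d_{k^\bot}^2=(n_{k^\bot}\cdot n_g)^2$ the integrand collapses to $\d_{k^\bot}^2\frac{\sin^2}{|k|^2\rho_t^2}\!\left(\frac{k\cdot(g_t-\tilde g_t)}{2}\right)$ after the cancellation of $|g_t-\tilde g_t|^2$ against what is hidden in $\d_{k^\bot}$ — giving exactly the right-hand side of \eqref{boundsigma}.

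For the lower bound \eqref{boundbt} on $b_t$, I would use \eqref{E78.1}, which gives $b_t=2\nu\rho_t\sum_k\l_k^2|k|^4\left\|\frac{\sin}{|k|\rho_t}\!\left(\frac{k\cdot(g_t-\tilde g_t)}{2}\right)\right\|_\TT^2-\frac12\s_t^2$ (dropping the stray $dt$ and the $\rho_t$ normalization carefully against \eqref{E76}). The first term, after expanding the $L^2$ norm, is $2\nu\sum_k\l_k^2|k|^4\int_\TT\frac{\sin^2}{|k|^2\rho_t^2}(\cdots)\,d\theta$. Now I bound $\frac12\s_t^2$ from above using the inequality \eqref{boundsigma} just proved, namely $\frac12\s_t^2\le 2\nu\sum_k\l_k^2|k|^4\int_\TT\d_{k^\bot}^2\frac{\sin^2}{|k|^2\rho_t^2}(\cdots)\,d\theta$, and subtract. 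Using $\d_k^2+\d_{k^\bot}^2=1$, i.e. $1-\d_{k^\bot}^2=\d_k^2=(n_g\cdot n_k)^2$, the difference is $\ge 2\nu\sum_k\l_k^2|k|^4\int_\TT(n_g\cdot n_k)^2\frac{\sin^2}{|k|^2\rho_t^2}(\cdots)\,d\theta$; the factored form in \eqref{boundbt} with the two separate integrals then follows from a further Cauchy--Schwarz or simply by noting $\int_\TT(n_g\cdot n_k)^2\frac{\sin^2}{|k|^2\rho_t^2}\ge\int_\TT(n_g\cdot n_k)^2\cdot\inf_\theta\frac{\sin^2}{|k|^2\rho_t^2}$ — actually the cleanest route is to observe that the claimed factored lower bound is weaker than the pointwise-product bound, so it suffices. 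Nonnegativity $b_t\ge0$ is then immediate since every factor in \eqref{boundbt} is $\ge0$.

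For the last assertion, assume $\l_k=0$ whenever $|k|>R$ and restrict to the event where $\sup_\theta|g_t(\theta)-\tilde g_t(\theta)|\le\pi/R$. On this set, for every $k$ with $\l_k\neq0$ we have $|k|\le R$, so $\left|\frac{k\cdot(g_t(\theta)-\tilde g_t(\theta))}{2}\right|\le\frac{|k|}{2}\cdot\frac{\pi}{R}\le\frac{\pi}{2}$, hence the argument of $\sin$ stays in $[-\pi/2,\pi/2]$. Combining \eqref{boundbt} and \eqref{boundsigma}, $b_t-\frac12\s_t^2\ge 2\nu\sum_k\l_k^2|k|^4\int_\TT\big((n_g\cdot n_k)^2-\d_{k^\bot}^2\big)\frac{\sin^2}{|k|^2\rho_t^2}(\cdots)\,d\theta$ — wait, this needs the non-factored form of \eqref{boundbt}, so I would instead carry the sharper intermediate bound $b_t\ge 2\nu\sum_k\l_k^2|k|^4\int_\TT(n_g\cdot n_k)^2\frac{\sin^2}{|k|^2\rho_t^2}(\cdots)\,d\theta$ derived above before passing to the factored statement, and subtract $\frac12\s_t^2$ bounded via \eqref{boundsigma}; using $(n_g\cdot n_k)^2=\d_k^2=1-\d_{k^\bot}^2$, the integrand becomes $(1-2\d_{k^\bot}^2)\frac{\sin^2}{|k|^2\rho_t^2}(\cdots)$, whose sign is not obviously nonnegative. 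The honest argument must instead be: on this event the map is close to identity, the factor $\frac{\sin}{|k|\rho_t}(\cdots)$ is comparable to $\frac{1}{2}\frac{|g_t-\tilde g_t|}{\rho_t}$ up to the nondecreasing-on-$[0,\pi/2]$ behavior of $t\mapsto\sin t/t$, which lets one compare the $\d_{k^\bot}^2$-weighted integral in $\s_t^2$ against the total integral in $b_t$ and conclude. The genuinely delicate point — and the one I expect to be the main obstacle — is precisely this last step: showing $b_t-\frac12\s_t^2\ge0$ requires exploiting that on the small-displacement event $\sin(x)/x$ is monotone, so that the oscillatory integrals defining $\s_t^2$ cannot exceed the "diagonal" term in $b_t$; making this comparison rigorous (rather than the naive term-by-term subtraction, which fails) is the crux.
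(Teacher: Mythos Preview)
Your approach to \eqref{boundsigma} has a genuine error: the ``cancellation of $|g_t-\tilde g_t|^2$ against what is hidden in $\d_{k^\bot}$'' does not happen. From \eqref{deltak} one has $(n_{k^\bot}\cdot n_g)^2=\d_{k^\bot}^2\,|g_t-\tilde g_t|^2/\rho_t^2$, so your single Cauchy--Schwarz yields $\int_\TT \d_{k^\bot}^2\,\frac{|g_t-\tilde g_t|^2}{\rho_t^2}\,\frac{\sin^2}{|k|^2\rho_t^2}(\cdots)\,d\theta$, which carries an uncancelled factor $|g_t-\tilde g_t|^2/\rho_t^2$ and is not \eqref{boundsigma}. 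The paper applies Cauchy--Schwarz \emph{twice}: first in the form $(\int FGH)^2\le \int |FH|G^2\cdot\int|FH|$ to collapse $\sin^2+\cos^2$, arriving at $\bigl(\int |n_g\cdot n_{k^\bot}|\,\frac{|\sin|}{|k|\rho_t}(\cdots)\,d\theta\bigr)^2$; then, after writing $|n_g\cdot n_{k^\bot}|=\d_{k^\bot}|g_t-\tilde g_t|/\rho_t$, a second Cauchy--Schwarz splits off the factor $|g_t-\tilde g_t|/\rho_t$ (whose square integrates to $1$) and gives exactly \eqref{boundsigma}. Similarly, the factored form of \eqref{boundbt} does not come from \eqref{boundsigma} as you suggest---passing from $\int \d_k^2\,\frac{\sin^2}{|k|^2\rho_t^2}(\cdots)$ to the product of integrals goes the wrong way---but from a \emph{separate} Cauchy--Schwarz bound $\s_t^2\le 4\nu\sum_k\l_k^2|k|^4\int(n_g\cdot n_{k^\bot})^2\,d\theta\cdot\int\frac{\sin^2}{|k|^2\rho_t^2}(\cdots)\,d\theta$, combined with $\int(n_g\cdot n_k)^2+\int(n_g\cdot n_{k^\bot})^2=1$.

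The decisive gap is in the last assertion. You correctly observe that naive subtraction produces an integrand proportional to $(1-2\d_{k^\bot}^2)$, whose sign is uncontrolled; but your guess that monotonicity of $\sin x/x$ fixes this is not the mechanism. What you are missing is the \emph{symmetrization} $k\leftrightarrow k^\bot$: since $\l_k=\l(|k|)=\l_{k^\bot}$, one groups the $k$ and $k^\bot$ terms. Using $k\cdot(g_t-\tilde g_t)=\d_k\,|k|\,|g_t-\tilde g_t|$ and the analogous identity for $k^\bot$, the paired difference becomes
\[
b_t-\tfrac12\s_t^2\ \ge\ \nu\sum_k\l_k^2|k|^4\int_\TT(\d_k^2-\d_{k^\bot}^2)\left(\frac{\sin^2}{|k|^2\rho_t^2}\Bigl(\d_k\tfrac{|k|\,|g_t-\tilde g_t|}{2}\Bigr)-\frac{\sin^2}{|k|^2\rho_t^2}\Bigl(\d_{k^\bot}\tfrac{|k|\,|g_t-\tilde g_t|}{2}\Bigr)\right)d\theta.
\]
On the event $|g_t-\tilde g_t|\le\pi/R$ and for $|k|\le R$, the arguments lie in $[-\pi/2,\pi/2]$, where $x\mapsto\sin^2 x$ is increasing in $|x|$; hence the second factor has the same sign as $|\d_k|-|\d_{k^\bot}|$, i.e.\ as $\d_k^2-\d_{k^\bot}^2$, and the integrand is nonnegative term by term. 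It is this pairing---not a comparison argument on $\sin x/x$---that closes the proof.
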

\begin{proof}
 Using Cauchy Schwartz inequality, 
\begin{align*}
\s_t^2& \le 4\sum_k \l_k^2 |k|^4 \nu \int_\TT |(n_g\cdot n_{k^\bot})|\sin^2 \left(\f{k\cdot (g_t(\theta)+\tilde g_t(\theta))}{2}\right)\f{|\sin|}{|k|\rho_t}\left(\f{k\cdot (g_t(\theta)-\tilde g_t(\theta))}{2}\right) d\theta\\&\times \int_\TT |(n_g\cdot n_{k^\bot})|)\f{|\sin|}{|k|\rho_t}\left(\f{k\cdot (g_t(\theta)-\tilde g_t(\theta))}{2}\right)d\theta\\
&+ 4\sum_k \l_k^2 |k|^4 \nu \int_\TT   |(n_g\cdot n_{k^\bot})|\cos^2 \left(\f{k\cdot (g_t(\theta)+\tilde g_t(\theta))}{2}\right)\f{|\sin|}{|k|\rho_t}\left(\f{k\cdot (g_t(\theta)-\tilde g_t(\theta))}{2}\right)d\theta \\&\times\int_\TT |(n_g\cdot n_{k^\bot})|\f{|\sin|}{|k|\rho_t}\left(\f{k\cdot (g_t(\theta)-\tilde g_t(\theta))}{2}\right)d\theta\\
&=4\nu\sum_k \l_k^2 |k|^4 \left(\int_\TT |(n_g\cdot n_{k^\bot})|\f{|\sin|}{|k|\rho_t}\left(\f{k\cdot (g_t(\theta)-\tilde g_t(\theta))}{2}\right)d\theta\right)^2\\
&=4\nu\sum_k \l_k^2 |k|^4\left(\int_\TT \f{\d_{k^\bot}|g_t(\theta)-\tilde g_t\theta)|}{\rho_t}\f{|\sin|}{|k|\rho_t}\left(\f{k\cdot (g_t(\theta)-\tilde g_t(\theta))}{2}\right)d\theta\right)^2\\
&\le 4\nu\sum_k \l_k^2 |k|^4\int_\TT\f{|g_t(\theta)-\tilde g_t(\theta)|^2}{\rho_t^2}\,d\theta\int_\TT \d_{k^\bot}^2\f{\sin^2}{|k|^2\rho_t^2}\left(\f{k\cdot (g_t(\theta)-\tilde g_t(\theta))}{2}\right)d\theta\\
&=4\nu\sum_k \l_k^2 |k|^4\int_\TT \d_{k^\bot}^2\f{\sin^2}{|k|^2\rho_t^2}\left(\f{k\cdot (g_t(\theta)-\tilde g_t(\theta))}{2}\right)d\theta.
\end{align*}
On the other hand, 
\begin{align*}
 &b_t+\f12\s_t^2= 2\nu\sum_k \l_k^2 |k|^4 \int_\TT \f{\sin^2}{|k|^2\rho_t^2}\left(\f{k\cdot (g_t(\theta)-\tilde g_t(\theta))}{2}\right)d\theta,
\end{align*}
so that using the bound 
\begin{align*}
\s_t^2&\le 4\nu\sum_k \l_k^2 |k|^4 \left(\int_\TT |(n_g\cdot n_{k^\bot})|\f{|\sin|}{|k|\rho_t}\left(\f{k\cdot (g_t(\theta)-\tilde g_t(\theta))}{2}\right)d\theta\right)^2\\
&\le 4\nu\sum_k \l_k^2 |k|^4 \int_\TT(n_g\cdot n_{k^\bot})^2\,d\theta \int_\TT \f{\sin^2}{|k|^2\rho_t^2}\left(\f{k\cdot (g_t(\theta)-\tilde g_t(\theta))}{2}\right)d\theta
\end{align*}
for $\s_t^2$ yields 
\begin{align*}
 b_t&\ge 2\nu\sum_k \l_k^2 |k|^4 \int_\TT(n_g\cdot n_{k})^2\,d\theta\int_\TT \f{\sin^2}{|k|^2\rho_t^2}\left(\f{k\cdot (g_t(\theta)-\tilde g_t(\theta))}{2}\right)d\theta
\end{align*}
where we used 
$$
\int_\TT(n_g\cdot n_{k})^2\,d\theta+\int_\TT(n_g\cdot n_{k^\bot})^2\,d\theta=1.
$$
Since $\l_k$ depends only on $|k|$, we have $\l_k=\l_{k^\bot}$ for all $k$. Then putting together the terms corresponding to $k$ and $k^\bot$ we obtain
\begin{align*}
 &b_t+\f12\s_t^2= \nu\sum_k \l_k^2 |k|^4\\
&\times \int_\TT \left(\f{\sin^2}{|k|^2\rho_t^2}\left(\f{k\cdot (g_t(\theta)-\tilde g_t(\theta))}{2}\right)+\f{\sin^2}{|k|^2\rho_t^2}\left(\f{k^{\bot}\cdot (g_t(\theta)-\tilde g_t(\theta))}{2}\right)\right)d\theta,
\end{align*}
and this yields using the bound for $\s_t^2$ as well as $\d_k^2+\d_{k^\bot}^2=1$ 
\begin{align*}
& b_t-\f12\s_t^2\ge\nu \sum_k \l_k^2 |k|^4\\&
\times\int_\TT (\d_k^2-\d_{k^\bot}^2)\left(\f{\sin^2}{|k|^2\rho_t^2}\left(\f{k\cdot (g_t(\theta)-\tilde g_t(\theta))}{2}\right)-\f{\sin^2}{|k|^2\rho_t^2}\left(\f{k^{\bot}\cdot (g_t(\theta)-\tilde g_t(\theta))}{2}\right)\right)d\theta\\
&=\nu \sum_k \l_k^2 |k|^4\\&
\times\int_\TT (\d_k^2-\d_{k^\bot}^2)\left(\f{\sin^2}{|k|^2\rho_t^2}\left(\d_k\f{|k||g_t-\tilde g_t|(\theta)}{2}\right)-\f{\sin^2}{|k|^2\rho_t^2}\left(\d_{k^\bot}\f{|k||g_t-\tilde g_t|(\theta)}{2}\right)\right)d\theta
\end{align*}

Assuming that   $\l_k=0$ whenever $|k|> R$ then on
$$
\left\{\om| \forall\theta\in \TT,\ |g_t(\theta)-\tilde g_t(\theta)|\le\f{\pi}{R}\right\}
$$
the functions inside the integral are nonegative, consequently
\begin{align*}
b_t-\f12\s_t^2&\ge 0.
\end{align*}
\end{proof}
Define
\begin{equation}
 \label{E71}
\ell(x)=\f{\sin x}{x}\ \hbox{for }\ x\not=0,\quad \ell(0)=1.
\end{equation}
From Lemma~\ref{L2} we easily get the following result.

\begin{prop}
\label{P4}
Let $R\ge 1$. Then on 
$$
\left\{\om| \forall\theta\in \TT,\ |g_t(\theta)-\tilde g_t(\theta)|\le\f{\pi\sqrt2}{R}\right\},
$$
 letting  $$ c_R=\f{\nu}{8}\ell^2\left(\f{\pi}{\sqrt{2}}\right)\sum_{|k|\le R}\l_k^2|k|^4,$$
we have,
\begin{equation}
\label{EF1}
\begin{split}
&d\rho_t\ge \rho_t \left(\s_t dz_t -\|\d u(t)\|_\TT\,dt-\int_\TT|n_g(t,\theta)|\f1{\rho_t}dL_t(\theta)+c_R\,dt\right).
\end{split}
\end{equation}
Moreover assuming that $\l_k=0$ whenever $|k|>R$, then 
letting
$$
c_R'=\f18\nu\inf_{|v|=1}\sum_{|k|\le R}\l_k^2|k|^4\left((n_k\cdot v)^2-(n_{k^\bot}\cdot v)^2\right)^2,
$$ 
on
$$
\left\{\om| \forall\theta\in \TT,\ |g_t(\theta)-\tilde g_t(\theta)|\le\f{\pi}{2R}\right\},
$$
\begin{equation}
\label{EF2}
\begin{split}
 &d\rho_t\ge \rho_t \left(\s_t dz_t +\f12\s_t^2\,dt -\|\d u(t)\|_\TT\,dt+c_R'\,dt\right).
\end{split}
\end{equation}
\end{prop}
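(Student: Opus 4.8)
The plan is to start from the It\^o decomposition \eqref{E76}, $d\rho_t=\rho_t\bigl(\s_t\,dz_t+b_t\,dt+\langle n_g(t),\d u(t)\rangle_\TT\,dt-da_t\bigr)$, and bound the finite variation part from below. Since $\rho_t^2=\|g_t-\tilde g_t\|_\TT^2$ forces $\|n_g(t)\|_\TT=1$, Cauchy--Schwarz gives $\langle n_g(t),\d u(t)\rangle_\TT\ge-\|\d u(t)\|_\TT$ at every time. Thus \eqref{EF1} reduces to the claim $b_t\ge c_R$ on the event that $|g_t(\theta)-\tilde g_t(\theta)|\le\pi\sqrt2/R$ for all $\theta$, while \eqref{EF2} reduces to the two claims that, on the event that $|g_t(\theta)-\tilde g_t(\theta)|\le\pi/(2R)$ for all $\theta$, one has $da_t=0$ and $b_t-\tfrac12\s_t^2\ge c_R'$.

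For the bound $b_t\ge c_R$ I would begin with inequality \eqref{boundbt} of Lemma~\ref{L2}, discard its nonnegative terms with $|k|>R$, and observe that on the first event $\bigl|\tfrac12\,k\cdot(g_t-\tilde g_t)(\theta)\bigr|\le\tfrac{|k|}{2}\cdot\tfrac{\pi\sqrt2}{R}\le\tfrac{\pi}{\sqrt2}$ for $|k|\le R$. Since $x\mapsto|\ell(x)|$ is nonincreasing on $[0,\pi]$, this yields $\sin^2\!\bigl(\tfrac12 k\cdot(g_t-\tilde g_t)(\theta)\bigr)\ge\ell^2\!\bigl(\tfrac{\pi}{\sqrt2}\bigr)\bigl(\tfrac12 k\cdot(g_t-\tilde g_t)(\theta)\bigr)^2$, hence $\int_\TT\tfrac{\sin^2}{|k|^2\rho_t^2}\bigl(\tfrac12 k\cdot(g_t-\tilde g_t)\bigr)d\theta\ge\tfrac14\ell^2\!\bigl(\tfrac{\pi}{\sqrt2}\bigr)\int_\TT(n_g\cdot n_k)^2\,d\theta$, and therefore $b_t\ge\tfrac{\nu}{2}\ell^2\!\bigl(\tfrac{\pi}{\sqrt2}\bigr)\sum_{|k|\le R}\l_k^2|k|^4\bigl(\int_\TT(n_g\cdot n_k)^2\,d\theta\bigr)^2$. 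Finally I would pair each index $k$ with $k^\bot$: since $\l_k=\l_{k^\bot}$, $|k^\bot|=|k|$ and $k\mapsto k^\bot$ permutes $\{|k|\le R\}$, combining $a^2+b^2\ge\tfrac12(a+b)^2$ with $\int_\TT(n_g\cdot n_k)^2\,d\theta+\int_\TT(n_g\cdot n_{k^\bot})^2\,d\theta=\int_\TT|n_g|^2\,d\theta=1$ gives $\sum_{|k|\le R}\l_k^2|k|^4\bigl(\int_\TT(n_g\cdot n_k)^2\,d\theta\bigr)^2\ge\tfrac14\sum_{|k|\le R}\l_k^2|k|^4$, i.e.\ exactly $b_t\ge c_R$.

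For \eqref{EF2}, on the second event every displacement $|g_t(\theta)-\tilde g_t(\theta)|\le\pi/(2R)$ stays strictly below the injectivity radius $\pi$ of $\TT$, so $(g_t(\theta),\tilde g_t(\theta))$ never meets the cut locus; hence $dL_t(\theta)=0$ for all $\theta$ and $da_t=0$ by \eqref{E78.2}. For the remaining estimate I would restart from the intermediate inequality established in the proof of Lemma~\ref{L2}, namely $b_t-\tfrac12\s_t^2\ge\nu\sum_k\l_k^2|k|^4\int_\TT(\d_k^2-\d_{k^\bot}^2)\bigl(\tfrac{\sin^2}{|k|^2\rho_t^2}(\d_k s)-\tfrac{\sin^2}{|k|^2\rho_t^2}(\d_{k^\bot}s)\bigr)d\theta$, where $s=s(k,\theta)=\tfrac12|k|\,|g_t-\tilde g_t|(\theta)$, so that $|\d_k s|,|\d_{k^\bot}s|\le s\le\pi/4$ for $|k|\le R$ on this event. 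Using the elementary inequality $(a^2-b^2)(\sin^2 a-\sin^2 b)\ge\tfrac2\pi(a^2-b^2)^2$ for $|a|,|b|\le\pi/4$ (which follows from $\tfrac{d}{dt}\sin^2(\sqrt t)=\tfrac{\sin(2\sqrt t)}{2\sqrt t}\ge\tfrac2\pi$ on $[0,(\pi/4)^2]$) with $a=\d_k s$, $b=\d_{k^\bot}s$, together with $\tfrac{s^2}{|k|^2\rho_t^2}=\tfrac14|n_g(t,\theta)|^2$ and the identity $\d_k=w(t,\theta)\cdot n_k$ for the unit vector $w(t,\theta)=n_g(t,\theta)/|n_g(t,\theta)|$, I obtain $b_t-\tfrac12\s_t^2\ge\tfrac{\nu}{2\pi}\int_\TT|n_g(t,\theta)|^2\Bigl(\sum_{|k|\le R}\l_k^2|k|^4\bigl((w\cdot n_k)^2-(w\cdot n_{k^\bot})^2\bigr)^2\Bigr)d\theta$. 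Bounding the inner sum below by $\inf_{|v|=1}\sum_{|k|\le R}\l_k^2|k|^4\bigl((v\cdot n_k)^2-(v\cdot n_{k^\bot})^2\bigr)^2$, then using $\int_\TT|n_g|^2\,d\theta=1$ and $\tfrac1{2\pi}\ge\tfrac18$, gives $b_t-\tfrac12\s_t^2\ge c_R'$.

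The difficulty is entirely in the bookkeeping rather than in any new idea: one must keep the nonnegative $|k|>R$ contributions available to justify the discards, carry the substitution $\tfrac12 k\cdot(g_t-\tilde g_t)(\theta)=\d_k s$ (equivalently $\d_k=w\cdot n_k$) correctly through the sine estimates, and perform the $k\leftrightarrow k^\bot$ symmetrization (via $\l_k=\l_{k^\bot}$, $\d_k^2+\d_{k^\bot}^2=1$, and $\int_\TT(n_g\cdot n_k)^2\,d\theta+\int_\TT(n_g\cdot n_{k^\bot})^2\,d\theta=1$) at precisely the step where it collapses the direction-dependent factors into the constants $c_R$ and $c_R'$; the quantitative $\sin^2$-inequality, with its constant $\tfrac2\pi>\tfrac18$, is what makes the $c_R'$-bound explicit.
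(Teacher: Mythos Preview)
Your proof is correct and follows essentially the same route as the paper's. The argument for \eqref{EF1} is identical to the paper's (bound $b_t$ via \eqref{boundbt}, use the monotonicity of $\ell$, then symmetrize over $k\leftrightarrow k^\bot$), and your argument for \eqref{EF2} coincides with the paper's up to two small refinements: you make explicit why $da_t=0$ on the second event (the paper leaves this implicit), and you prove the elementary $\sin^2$-inequality with the sharper constant $2/\pi$ where the paper tacitly uses $1/2$; both constants exceed the $1/8$ needed to recover $c_R'$.
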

\begin{proof}
If $\di |g_t(\theta)-\tilde g_t(\theta)|\le\f{\pi\sqrt2}{R}$ then for all $k$ such that $|k|\le R$, 
$$
\ell^2\left(\f{k\cdot (g_t(\theta)-\tilde g_t(\theta))}{2}\right)\ge \ell^2\left(\f{\pi}{\sqrt{2}}\right)
$$
and this implies 
$$
\f{\sin^2}{|k|^2\rho_t^2}\left(\f{k\cdot (g_t(\theta)-\tilde g_t(\theta))}{2}\right)\ge \f14\ell^2\left(\f{\pi}{\sqrt{2}}\right)(n_k\cdot n_g)^2.
$$
So with~\eqref{boundbt} we get 
\begin{align*}
 \label{boundbt}
b_t&\ge \f12\ell^2\left(\f{\pi}{\sqrt{2}}\right)\nu\sum_{|k|\le R} \l_k^2 |k|^4 \left(\int_\TT(n_g\cdot n_{k})^2\,d\theta\right)^2\\
&\ge \f14\ell^2\left(\f{\pi}{\sqrt{2}}\right)\nu\sum_{|k|\le R} \l_k^2 |k|^4 \left(\left(\int_\TT(n_g\cdot n_{k})^2\,d\theta\right)^2+\left(\int_\TT(n_g\cdot n_{k^\bot})^2\,d\theta\right)^2\right)\\
&\ge \f18\ell^2\left(\f{\pi}{\sqrt{2}}\right)\nu\sum_{|k|\le R} \l_k^2 |k|^4
\end{align*}
(again we used $
\int_\TT(n_g\cdot n_{k})^2\,d\theta+\int_\TT(n_g\cdot n_{k^\bot})^2\,d\theta=1.
$).
This establishes \eqref{EF1}.

Next if $\di |g_t(\theta)-\tilde g_t(\theta)|\le\f{\pi}{2R}$ then from the calculation in the proof of Lemma~\ref{L2}
\begin{align*}
 &b_t-\f12\s_t^2\ge\nu \sum_{|k|\le R} \l_k^2 |k|^4\\&
\times\int_\TT (\d_k^2-\d_{k^\bot}^2)\left(\f{\sin^2}{|k|^2\rho_t^2}\left(\d_k\f{|k||g_t-\tilde g_t|(\theta)}{2}\right)-\f{\sin^2}{|k|^2\rho_t^2}\left(\d_{k^\bot}\f{|k||g_t-\tilde g_t|(\theta)}{2}\right)\right)d\theta\\
&\ge \nu \sum_{|k|\le R} \l_k^2 |k|^4\int_\TT (\d_k^2-\d_{k^\bot}^2)^2\f{|g_t-\tilde g_t|^2(\theta)}{8\rho_t^2}\,d\theta\\
&\ge \int_\TT\f{|g_t-\tilde g_t|^2(\theta)}{\rho_t^2}c_R'\,d\theta=c_R'.
\end{align*}
this establishes \eqref{EF2}.
\end{proof}

\begin{thm}
 \label{T2}
Let $t>0$, $R\ge 1$  and 
$$
\Om_t=\left\{\om\in\Om,\ \forall s\le t, \ \forall \theta\in \TT,\ |(g_s(\theta)(\om)-\tilde g_s(\theta)(\om))|\le \f{\pi}{2R}\right\}.
$$
If we assume the initial conditions for the $L^2$ distance and the $L^2$ norm of the initial velocity related as $c=\rho_0 -2\|u_0 \|_\TT >0$, and suppose that $\int_{\TT} u =0$, 
 then on $\Om_t$, 
\begin{equation}\label{EF5}
 \forall s\le t,\ \ \rho_s\ge e^{\int_0^t\s_s\,dz_s+c_R't}\left(\rho_0-2\|u_0\|_\TT\int_0^te^{-\int_0^s\s_r\,dz_r-(c_R'+\f{\nu}2)s}\,ds\right)
\end{equation}
as long as the right hand side stays positive.

On the other hand if we assume that there exist constants $c_1,c_2>0$ such that for all $\theta\in \TT$ and $s\in [0,t]$ , 
\begin{equation}
\label{boundgradu}
|\n u(t,\theta)|\le c_1e^{-c_2t},
\end{equation} then on $\Om_t$, $\forall s\le t,\ \ $
\begin{equation}\label{EF5bis}
 \rho_s\ge \rho_{0}\exp\left(\int_{0}^t\s_s\,dz_s+c_R't-\f{c_1}{c_2}\left(1-e^{-c_2t}\right)\right).
\end{equation}
\end{thm}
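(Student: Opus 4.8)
The plan is to feed the differential inequalities of Proposition~\ref{P4} into a Gronwall/Duhamel argument, after disposing of the local-time term and estimating $\|\d u(\cdot)\|_\TT$ from each of the two hypotheses separately. First I would note that on $\Om_t$ the two particles stay within Euclidean distance $\pi/(2R)<\pi$, so $(g_s(\theta),\tilde g_s(\theta))$ never reaches the cut locus of $\TT$ for $s\le t$; hence $L_s\equiv0$ and $a_s\equiv0$ there, while $\rho_s>0$ throughout by Lemma~\ref{L1}. Then \eqref{E77} with $t_0=0$ gives, on $\Om_t$ and for all $s\le t$,
\[
\log\rho_s-\log\rho_0=\int_0^s\s_r\,dz_r+\int_0^s\Big(b_r-\tfrac12\s_r^2+\langle n_g(r),\d u(r)\rangle_\TT\Big)\,dr,
\]
and comparing \eqref{EF2} term by term with the exact It\^o equation \eqref{E76} of Proposition~\ref{P2} shows that on $\Om_t$ the $dr$-integrand above is $\ge c_R'-\|\d u(r)\|_\TT$. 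Thus both assertions reduce to estimating $\|\d u(r)\|_\TT=\rho_r^{-1}\|u(r,g_r)-u(r,\tilde g_r)\|_\TT$.

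For the second inequality, on $\Om_t$ the points $g_r(\theta)$ and $\tilde g_r(\theta)$ are joined by a unique short geodesic, so the mean value inequality together with \eqref{boundgradu} gives $|u(r,g_r(\theta))-u(r,\tilde g_r(\theta))|\le c_1e^{-c_2r}\,|g_r(\theta)-\tilde g_r(\theta)|$, whence $\|u(r,g_r)-u(r,\tilde g_r)\|_\TT\le c_1e^{-c_2r}\rho_r$ and $\|\d u(r)\|_\TT\le c_1e^{-c_2r}$. Substituting this into the displayed identity and using $\int_0^s c_1e^{-c_2r}\,dr=\tfrac{c_1}{c_2}(1-e^{-c_2s})$ yields \eqref{EF5bis}.

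For the first inequality, $\rho_r\|\d u(r)\|_\TT=\|u(r,g_r)-u(r,\tilde g_r)\|_\TT\le\|u(r,g_r)\|_\TT+\|u(r,\tilde g_r)\|_\TT=2\|u(r,\cdot)\|_\TT$ since $g_r,\tilde g_r$ are volume preserving, and the energy estimate for \eqref{E42} together with the Poincar\'e inequality on $\TT$ --- this is where the hypothesis $\int_\TT u=0$, preserved by Navier--Stokes, is used --- gives $\|u(r,\cdot)\|_\TT\le e^{-\nu r/2}\|u_0\|_\TT$. Hence, on $\Om_t$ and $s\le t$,
\[
d\rho_s\ge\rho_s\s_s\,dz_s+\rho_s\big(\tfrac12\s_s^2+c_R'\big)\,ds-2\|u_0\|_\TT\,e^{-\nu s/2}\,ds,
\]
whose forcing term is no longer proportional to $\rho_s$. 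So instead of exponentiating I would use the integrating factor $\Psi_s=\exp\big(-\int_0^s\s_r\,dz_r-c_R's\big)$ of the homogeneous linear equation: by It\^o's product rule (using the exact equation \eqref{E76}) the stochastic integral cancels and $d(\Psi_s\rho_s)=\Psi_s\rho_s\big(b_s-\tfrac12\s_s^2+\langle n_g(s),\d u(s)\rangle_\TT-c_R'\big)\,ds\ge-\Psi_s\rho_s\|\d u(s)\|_\TT\,ds\ge-2\Psi_s\|u_0\|_\TT e^{-\nu s/2}\,ds$ --- this is precisely why the $+\tfrac12\s_s^2$ term must appear in \eqref{EF2}. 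Integrating from $0$ to $s$, dividing by $\Psi_s>0$, and using $\Psi_r e^{-\nu r/2}=\exp\big(-\int_0^r\s_\ell\,dz_\ell-(c_R'+\tfrac\nu2)r\big)$ gives \eqref{EF5}; the clause ``as long as the right-hand side stays positive'' records only that the estimate is informative while the right-hand side of \eqref{EF5} is positive, which the assumed relation $c=\rho_0-2\|u_0\|_\TT>0$ secures near $s=0$.

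The main obstacle is the first part: choosing the integrating factor so that it kills exactly the stochastic integral together with the $\tfrac12\s_s^2+c_R'$ drift, leaving only the exponentially decaying forcing, and supplying the Navier--Stokes energy/Poincar\'e decay $\|u(r,\cdot)\|_\TT\le e^{-\nu r/2}\|u_0\|_\TT$ on $\TT$. The second part is then routine. Both parts rely on Lemma~\ref{L1} to legitimise dividing by $\rho_s$ and taking logarithms, and on the cut-locus observation to discard the local-time increment $da_s$.
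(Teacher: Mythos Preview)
Your proposal is correct and follows essentially the same route as the paper. Both parts start from the differential inequality \eqref{EF2} of Proposition~\ref{P4}, bound $\|\d u\|_\TT$ in the two ways you describe (triangle inequality plus the Navier--Stokes energy/Poincar\'e decay for \eqref{EF5}, mean-value inequality for \eqref{EF5bis}), and then integrate; the paper merely says ``comparison theorem for solutions of sde's'' where you spell out the integrating-factor computation with $\Psi_s$, and it does not make explicit the cut-locus argument for $da_s=0$ on $\Om_t$ that you correctly supply.
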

 
\begin{proof} Assume that $\rho_0 -2\|u_0 \|_\TT >0$.
From inequality~\eqref{EF2} we have on $\Om_t$, for $s\le t$,
\begin{equation}
\label{EF6}
\begin{split}
 &d\rho_s\ge \rho_s \left(\s_s dz_s + (c_R'+\f12\s_s^2)\,ds\right)-2\|u(s,\cdot)\|_\TT\,ds.
\end{split}
\end{equation}
Using the fact that $u(t,.)$ satisfies Navier-Stokes equation together with Poincar\'e inequality,
\begin{align*}\f{d}{ds}||u(s,.)||_\TT^2 &=-2\nu  ||\nabla u (s ,.)||_\TT^2 \\
&\le  - \nu  ||u(s ,.)||_\TT^2.
\end{align*}

Therefore we have

$$||u (s,.)||_\TT \le e^{-\f{\nu}{2} s} ||u_0||_\TT.$$
We obtain
\begin{equation}
\label{EF7}
\begin{split}
d\rho_s\ge \rho_s \left(\s_s dz_s + (c_R'+\f12\s_s^2)\,ds\right)-2e^{-\f{\nu}{2} s} ||u_0||_\TT\,ds.
\end{split}
\end{equation}
 From this comparison theorem for solution of sde's yields~\eqref{EF5}.
 
 Now assume~\eqref{boundgradu}. To prove~\eqref{EF5bis} we start with~\eqref{EF2}, and  remark that $\di \|\d u(t)\|_\TT\le \sup_{\theta\in\TT}|\n u(t,\theta)|$.
 Then with the bound on $\n u(t,\theta)$ we have
 $$
 d\rho_t\ge \rho_t \left(\s_t dz_t +\f12\s_t^2\,dt -c_1e^{-c_2t}\,dt+c_R'\,dt\right).
 $$
 Integrating the right hand side between $t_0$ and $t$ gives the result.
\end{proof}
\begin{remark}
 The bound~\eqref{boundgradu} is satisfied for instance for solutions $u(t,\cdot)$ of the form $e^{-\nu |k|^2 t } A_k$. 

Also notice that, by the expression of the constant $c_R'$, the stochastic Lagrangian trajectories for a fluid with a given viscosity constant tend to get apart faster when the higher Fourier modes (and therefore the smaller lenght scales) are randomly excited.
\end{remark}

\section{The two-dimensional torus endowed with the extrinsic distance}\label{Section7}
\setcounter{equation}0

It seems difficult to deal with the local time term of Proposition~\ref{P2}. To circumvent this problem we propose to endow the torus $\TT$ with a distance $\rho_\TT$ equivalent to the  one of section~\ref{Section6}, but such that $\rho_\TT^2$ is smooth on $\TT\times \TT$. Then we will see that when the assumptions of Theorem~\ref{T2} are not fulfilled, then the behaviour of the distance of two diffeomorphisms can be completely different even if their distance is small. So the uniform control of the distance in Theorem~\ref{T2} looks as a necessary condition for an exponential growth of the distance.

The map
\begin{align*}
 \RR/2\pi\ZZ\times \RR/2\pi\ZZ&\to [0,2]\\
(\theta_1,\theta_2)&\mapsto 2\left|\sin\left(\f{\theta_2-\theta_1}{2}\right)\right|
\end{align*}
defines a distance on the circle $\RR/2\pi\ZZ$: it is the extrinsic distance on the circle embedded in the plane. From this distance we can define the product distance on the torus $\TT$.
$$
\rho_\TT((\theta_1,\theta_2),(\theta_1',\theta_2'))=2\left(\sin^2\left(\f{\theta_1'-\theta_1}{2}\right)+\sin^2\left(\f{\theta_2'-\theta_2}{2}\right)\right)^{1/2}.
$$
Note 
$$
\rho_\TT^2((\theta_1,\theta_2),(\theta_1',\theta_2'))=2\left(2-\cos(\theta_1'-\theta_1)-\cos(\theta_2'-\theta_2)\right).
$$
The distance $\rho_\TT^2$ is smooth on $\TT\times \TT$.
Now let $\phi$ and $\psi$ be two diffeomorphisms on the torus $\TT$. We define the distance $\rho(\phi,\psi)$ with the formula
\begin{align*}
 \rho^2(\phi,\psi)&=\int_\TT\rho_\TT^2(\phi(\theta), \psi(\theta))\,d\theta\\
&=2\int_\TT\left(2-\cos (\phi^1(\theta)-\psi^1(\theta))-\cos (\phi^2(\theta)-\psi^2(\theta))\right)\,d\theta\\
&=4\int_\TT\left(\sin^2\left(\f{\phi^1(\theta)-\psi^1(\theta)}{2}\right)+\sin^2\left(\f{\phi^2(\theta)-\psi^2(\theta)}{2}\right)\right)\,d\theta
\end{align*}
Now let 
$$
\rho_t=\rho(g_t,\tilde g_t).
$$
>From the smoothness of $\rho_\TT^2$, the formula for $\rho_t$ does not involve a local time.
 More precisely, 
letting 
$$
\d g=g_t(\theta)-\tilde g_t(\theta),$$
$$
\d \cos k\cdot g=\cos k\cdot g_t(\theta)-\cos k\cdot \tilde g_t(\theta),$$
$$
 \d \sin k\cdot g=\sin k\cdot g_t(\theta)-\sin k\cdot \tilde g_t(\theta), 
$$
$$
\sin\d g=(\sin (\d g_t)_1(\theta), \ \sin(\d g_t)_2(\theta)),
$$
$$
\d u =\left(u(t,g_t)-u(t,\tilde g_t)\right)
$$
we get from It\^o calculus 
\begin{align*}
d\rho_t&=\rho_t\sum_k \lambda_k \left\langle \f{\sin\d g}{\rho_t}, (k_2,-k_1)\left(\f{\d \cos k\cdot g}{\rho_t}dx_k+\f{\d \sin k\cdot g}{\rho_t}dy_k\right)\right\rangle_\TT \\
&+ \rho_t\left\langle \f{\sin\d g}{\rho_t},\ \f{\d u}{\rho_t}  \right\rangle_\TT\,dt\\
&+\f{\rho_t}{2}\left(\sum_k  \lambda_k^2 \int_\TT\left(k_2^2\cos \d g_1+k_1^2\cos\d g_2\right)\f{(\d \cos k\cdot g)^2+(\d \sin k\cdot g)^2}{\rho_t^2}\,d\theta\right)\,dt\\
&-\f{\rho_t}{2}\sum_k \lambda_k^2\left(\int_\TT\left(k_2\f{\sin\d g_1}{\rho_t}-k_1\f{\sin\d g_2}{\rho_t}\right)\f{\d \cos k\cdot g}{\rho_t}\,d\theta\right)^2\,dt\\
&-\f{\rho_t}{2}\sum_k \lambda_k^2 \left(\int_\TT\left(k_2\f{\sin\d g_1}{\rho_t}-k_1\f{\sin\d g_2}{\rho_t}\right)\f{\d \sin k\cdot g}{\rho_t}\,d\theta\right)^2\,dt.
\end{align*}
This clearly has the form
$$
d\rho_t=\rho_t\left(\s_t\,dz_t+b_t\,dt\right)
$$
where $\s_t$ and $b_t$ are bounded processes and $z_t$ is a real- valued Brownian motion. However it can happen that the drift is negative even if $\rho_t$ is small, as the following example shows. 
\begin{example}
Let $\a>0$ small and $\e>0$ satisfying $\e<<\a$. Take $\phi=\id$ and assume that there exist two subsets $E_1$ and $E_2$ of $\TT$ such that $E_1\subset E_2$, $E_1$ has measure $\a$, $E_2$ has measure $\a+\e$, $\psi(\theta)=\theta$ for all $\theta \in \TT\backslash E_2$ and $\psi(\theta)=(\theta_1+\pi,\theta_2)$ for all $\theta\in E_1$. Since $\e$ can be as small as we want, we have 
$$
\rho_0^2\simeq 4\a,\quad (\sin\d g)_0\simeq 0,\quad (\d g_0)_2\simeq 0,\quad (\d \sin k\cdot g)_0\simeq 0,
$$
$$
\hbox{on}\  \TT\backslash E_2,
\quad 
(\d \cos k\cdot g)_0=0,
$$
$$
\hbox{
on}\ E_1,
\quad
(\d \cos k\cdot g)_0= -2\quad \hbox{if $k_1$is odd},\quad (\d \cos k\cdot g)_0= 0\quad \hbox{if $k_1$is even},
$$
so at time $t=0$, 
\begin{align*}
d\rho_t\simeq
-\f{\rho_t}{2}\left(\sum_{ k_1\ \hbox{odd}} \lambda_k^2 k_2^2 \right)\,dt.
\end{align*}

 To construct a diffeomorphism like $\psi$, one can cut an annulus $E_1$ of width $\di\f{\a}{2\pi}$ in $\TT$ and rotate it by $\pi$. This yields a  one to one map on $\TT$. Then smoothen it around the boundary of the annulus  to get $\psi$. The set $E_2$ can be taken as an annulus of width $\di\f{\a+\e}{2\pi}$ containing $E_1$.
\end{example}

\section{Distance  and rotation processes of two particles on a general Riemannian manifold}\label{Section3bis} 
\vskip 3mm

\subsection{Distance of two particles}
$ $

\vskip 2mm 

 Let $B_t=(B_t^\ell)_{\ell\ge 0}$ be a family of independent real Brownian motions, $\s=(\s_\ell)_{\ell\ge 0}$, with,  for all $\ell\ge 0$, $\s_\ell$ a divergence free vector field on $M$. We furthermore assume that 
\begin{equation}
 \label{E94}
\s(x)\s^\ast(y)=a(x,y).
\end{equation}
In particular 
\begin{equation}
 \label{E93}
\s(x)\s^\ast(x)=2\nu {\bf g}^{-1}(x).
\end{equation}
We let $\varphi, \psi\in G_V^0$.
In this section we assume that 
\begin{equation}
\label{E90}
dg_t(x)=\s(g_t(x))\,dB_t+u(t,g_t(x))\,dt, \quad g_0=\varphi
\end{equation}
and 
\begin{equation}
\label{E91}
d\tilde g_t(x)=\s(\tilde g_t(x))\,dB_t+u(t,\tilde g_t(x))\,dt, \quad \tilde g_0=\psi
\end{equation}

For simplicity we let $x_t=g_t(x)$, $y_t=\tilde g_t(x)$ and  $$\rho_t(x)=\rho_M(x_t,y_t)$$
 For  $x,y\in M$ such that $y$ does not belong to the cutlocus of $x$, we let  $a\mapsto \g_a(x,y)$ be the minimal geodesic in time $1$ from $x$ to $y$ ($\g_0(x,y)=x$, $\g_1(x,y)=y)$). For $a\in [0,1]$ we let $J_a=T\g_a$ the tangent map to $\g_a$. In other words, for $v\in T_xM$ and $w\in T_yM$, $J_a(v,w)$ is the value at time $a$ of the Jacobi field along $\g_\cdot$ which takes the values $v$ at time $0$ and $w$ at time $1$.

We first consider the case where $y_t$ does not belong to the cutlocus of $x_t$.
 We note $T_a=T_a(t)=\dot \g_a(x_t,y_t)$ and $\g_a(t)=\g_a(x_t,y_t)$.

 %Consider $a\mapsto J_a^\ell(t,x)$  the Jacobi field such that $J_0^\ell(t,x)=\s_\ell(g_t(x))$, $J_1^\ell(t)=\s_\ell(\tilde g_t(x))$. 
%Moreover we assume that $\n_{J_0^\ell(t,x)}J_0^\ell(t,x)=0$ and  $\n_{J_0^\ell(t,x)}J_0^\ell(t,x)=0$. 

Letting $P(\g_a)_t$ be the parallel transport along $\g_a(t)$, we have for the It\^o covariant differential
\begin{align*}
\SD\dot\g_a(t)&:=P(\g_a)_td\left(P(\g_a)_t^{-1}\dot\g_a(t)\right)\\
&=\n_{d\g_a(t)}\dot\g_a+\f12 \n_{ d\g_a(t)}\cdot \n_{ d\g_a(t)}\dot\g_a (t).
\end{align*}
On the other hand the It\^o differential $d\g_a(t)$ satisfies
\begin{align*}
 d\g_a(t)&= J_a(dx_t,dy_t)+\f12\left(\n_{(dx_t,dy_t)}J_a\right)(dx_t,dy_t).
\end{align*}
So we get 
\begin{equation}
 \label{E80}
 \SD\dot\g_a(t)=\n_{J_a(dx_t,dy_t)}\dot\g_a+\n_{\f12\left(\n_{(dx_t,dy_t)}J_a\right)(dx_t,dy_t)}\dot\g_a+\f12 \n_{ d\g_a(t)}\cdot \n_{ d\g_a(t)}\dot\g_a (t).
\end{equation}
Let $e(t)\in T_{x_t}M$  be the unit vector satisfying $T_0(t)=\rho_t(x)e(t)$.  
 For $\ell\ge 0$ we let $a\mapsto J_a^\ell(t,x)$ be the Jacobi field such that $J_0^\ell(t,x)=\s_\ell(g_t(x))$, $J_1^\ell(t)=\s_\ell(\tilde g_t(x))$. Moreover we assume that $\n_{J_0^\ell(t,x)}J_0^\ell(t,x)=0$ and  $\n_{J_1^\ell(t,x)}J_1
 ^\ell(t,x)=0$.

With these notations, equation~\eqref{E80}  rewrites as
\begin{align*}
\SD T_a&=\n_{J_a(dx_t,dy_t)}T_a+\f12\sum_{\ell \ge 0}\n_{\n_{J_a^\ell }J_a^\ell }T_a\,dt+\f12\sum_{\ell \ge 0}\n_{J_a^\ell }\cdot\n_{J_a^\ell } T_a\,dt\\
&=\dot J_a(dx_t,dy_t)+\f12 \sum_{\ell \ge 0}\n_{J_a^\ell }\n_{J_a^\ell } T_a\,dt.
\end{align*}
 We have
\begin{align*}
 d\rho_t(x)&= d\left(\left( \int_0^1\left\langle T_a(t),T_a(t)\right\rangle\, da\right)^{1/2}\right)\\
&=\f1{2\rho_t(x)}\left(2\int_0^1\left\langle \SD T_a(t),T_a(t)\right\rangle\, da+\int_0^1\left\langle \SD T_a(t),\SD T_a(t)\right\rangle\, da\right)\\
&-\f1{8\rho_t(x)^3}d\left(\|T_0\|^2\right)\cdot d\left(\|T_0\|^2\right)\\
&=\sum_{\ell\ge 0}\left\langle \dot J_0^\ell(t,x), e_t(x)  \right\rangle \,dB_t^\ell+\left\langle \dot J_0(u(t,g_t(x)), u(t,\tilde g_t(x))), e_t(x)\right\rangle\\
&+\f1{2\rho_t(x)}\left(\int_0^1 \sum_{\ell \ge 0 }\left\langle \n_{J_a^\ell }\n_{J_a^\ell }T_a,T_a\right\rangle\,da\,dt +\sum_{\ell \ge 0}\int_0^1\|\dot J_a^\ell \|^2\,da\right)\\
&-\f1{2\rho_t(x)}\sum_{\ell\ge 0}\langle \dot J_0^\ell(t,x),e_t(x)\rangle^2.
\end{align*}
Note
\begin{align*}
 \int_0^1\left\langle \n_{J_a^\ell }\n_{J_a^\ell }T_a,T_a\right\rangle\,da&=\int_0^1\left\langle \n_{J_a^\ell }\n_{T_a}J_a^\ell ,T_a \right\rangle\,da\\
&=\int_0^1\left\langle \n_{T_a}\n_{J_a^\ell }J_a^\ell ,T_a \right\rangle\,da-\int_0^1\left\langle R(T_a,J_a^\ell )J_a^\ell ,T_a \right\rangle\,da\\
&=\int_0^1T_a\left\langle \n_{J_a^\ell }J_a^\ell ,T_a \right\rangle\,da-\int_0^1\left\langle R(T_a,J_a^\ell )J_a^\ell ,T_a \right\rangle\,da\\
&=\left[\left\langle \n_{J_a^\ell }J_a^\ell ,T_a \right\rangle\right]_0^1-\int_0^1\left\langle R(T_a,J_a^\ell )J_a^\ell ,T_a \right\rangle\,da\\
&=-\int_0^1\left\langle R(T_a,J_a^\ell )J_a^\ell ,T_a \right\rangle\,da
\end{align*}
using the fact that $\n_{J_a^\ell}J_a^\ell=0$ for $a=0,1$.
So finally, 
\begin{align*}
d\rho_t(x)&=\sum_{\ell\ge 0}\left\langle \dot J_0^\ell(t,x), e_t(x)  \right\rangle \,dB_t^\ell\\
&+\left\langle \dot J_0(u(t,g_t(x)), u(t,\tilde g_t(x))), e_t(x)\right\rangle \\
&+ \f1{2\rho_t(x)}\left(\int_0^1\sum_{\ell\ge 0}\left(\|\dot J_a^{\ell, N}\|^2- 
\left\langle R(T_a(t,x),J_a^{\ell,N}(t,x))J_a^{\ell,N}(t,x),T_a(t,x)\right\rangle
\right)\,da\right)\,dt
\end{align*}
with $J_a^{\ell,N}(t,x)$ the part of $J_a^{\ell}(t,x)$ normal to $T_a$.

Removing the assumption that $y_t$ does not belong to the cutlocus of $x_t$, it is well known (see \cite{Kendall:86} for a similar argument) that  the formula becomes
\begin{align*}
d\rho_t(x)&=\sum_{\ell\ge 0}\left\langle \dot J_0^\ell(t,x), e_t(x)  \right\rangle \,dB_t^\ell\\
&+\left\langle \dot J_0(u(t,g_t(x)), u(t,\tilde g_t(x))), e_t(x)\right\rangle- dL_t(x) \\
&+ \f1{2\rho_t(x)}\left(\int_0^1\sum_{\ell\ge 0}\left(\|\dot J_a^{\ell, N}\|^2- 
\left\langle R(T_a(t,x),J_a^{\ell,N}(t,x))J_a^{\ell,N}(t,x),T_a(t,x)\right\rangle
\right)\,da\right)\,dt
\end{align*}
where $-L_t(x)$ is the local time of $\rho_t(x)$ when $(g_t(x), \tilde g_t(x))$ visits the cutlocus. Then letting
$$
\rho_t=\rho(g_t,\tilde g_t)=\left(\int_M\rho_t^2(x)\,dx\right)^{1/2}, 
$$
we get
\begin{align*}
d\rho_t&=\f1{\rho_t}\sum_{\ell\ge 0}\left(\int_M\rho_t(x)\left\langle\dot J_0^\ell(t,x), e_t(x)  \right\rangle\, dx\right)\, dB_t^\ell\\
&+\f1{\rho_t}\int_M\rho_t(x)\left\langle \dot J_0(u(g_t(x)), u(\tilde g_t(x))), e_t(x)\right\rangle\, dx\, dt-\f1{\rho_t}\int_M\rho_t(x)L_t(x)\,dx\\
&+\f1{2\rho_t}\left(\int_M\sum_{\ell\ge 0}\left(\int_0^1\left(\|\dot J_a^{\ell, N}\|^2- 
\left\langle R(T_a(t,x),J_a^{\ell,N}(t,x))J_a^{\ell,N}(t,x),T_a(t,x)\right\rangle
\right)\,da\right)\,dx\right)\,dt\\
&+\f1{2\rho_t}\int_M\sum_{\ell\ge 0}\left\langle\dot J_0^\ell(t,x), e_t(x)  \right\rangle^2\, dx\,dt\\&-\f1{2\rho_t^3}\sum_{\ell\ge 0}\left(\int_M\rho_t(x)\left\langle\dot J_0^\ell(t,x), e_t(x)  \right\rangle\,dx\right)^2\,dt.
\end{align*}
For a vector  $w\in T_{g_t(x)} M$,  we let $w^T$ the part of $w$ tangential to $T_0(t,x)$. 
Letting 
$$
\cos\left(\dot J_0^{\ell,T}(t,\cdot), T_0(t,\cdot)\right)=\f{\int_M\left\langle \dot J_0^{\ell,T}(t,x),T_0(t,x)\right\rangle \,dx}{\rho_t\left(\int_M\left\|\dot J_0^{\ell,T}(t,x)\right\|^2\, dx\right)^{1/2}}
$$
(observe $\di \rho_t^2=\int_M\left\|T_0(t,x)\right\|^2\, dx$), we finally proved 
\begin{prop}
\label{P3}
The It\^o differential of  the distance $\rho_t$ between $g_t$ and $\tilde g_t$ is given by
\begin{align*}
d\rho_t&=\f1{\rho_t}\sum_{\ell\ge 0}\left(\int_M\rho_t(x)\left(P_{\tilde g_t(x),g_t(x)}(\s_\ell^T(\tilde g_t(x)))-\s_\ell^T(g_t(x)\right)\, dx\right)\, dB_t^\ell\\
&+\f1{\rho_t}\int_M\rho_t(x)\left(P_{\tilde g_t(x),g_t(x)}(u^T(\tilde g_t(x))))-u^T(g_t(x))\right)\, dx\, dt-\f1{\rho_t}\int_M\rho_t(x)dL_t(x)\,dx\\
&+\f1{2\rho_t}\left(\int_M\sum_{\ell\ge 0}\left(\int_0^1\left(\|\dot J_a^{\ell, N}\|^2- 
\left\langle R(T_a(t,x),J_a^{\ell,N}(t,x))J_a^{\ell,N}(t,x),T_a(t,x)\right\rangle
\right)\,da\right)\,dx\right)\,dt\\
&+\f1{2\rho_t}\sum_{\ell\ge 0}\left(1-\cos^2\left(\dot J_0^{\ell,T}(t,\cdot), T_0(t,\cdot)\right)\right)\int_M\left\|\dot J_0^{\ell,T}(t,x)\right\|^2\, dx\,dt.
\end{align*}
\end{prop}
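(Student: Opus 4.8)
The plan is to obtain the It\^o differential of $\rho_t$ in three stages: first the radial process of a single particle-pair $\bigl(g_t(x),\tilde g_t(x)\bigr)$ while it stays off the cutlocus, then the cutlocus correction, and finally the integration over $x\in M$.

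First I would fix $x$ and work on the event where $y_t:=\tilde g_t(x)$ avoids the cutlocus of $x_t:=g_t(x)$, so that $a\mapsto\g_a(x_t,y_t)$ and $T_a$ depend smoothly on the endpoints and $\rho_t(x)^2=\int_0^1\|T_a\|^2\,da$. It\^o-expanding the map $(x,y)\mapsto\g_a(x,y)$ through its Jacobi-field differential gives $d\g_a(t)=J_a(dx_t,dy_t)+\f12(\n_{(dx_t,dy_t)}J_a)(dx_t,dy_t)$, whence, after taking the It\^o covariant differential and using $\n_{J_0^\ell}J_0^\ell=\n_{J_1^\ell}J_1^\ell=0$, the formula for $\SD T_a$ displayed above. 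Applying It\^o's formula to $\rho_t(x)=\bigl(\int_0^1\|T_a\|^2\,da\bigr)^{1/2}$ then produces: the term $\int_0^1\langle\SD T_a,T_a\rangle\,da$, which, since $\g_\cdot$ is a geodesic, collapses by the first variation of energy to endpoint data — the martingale $\sum_\ell\langle\dot J_0^\ell(t,x),e_t(x)\rangle\,dB_t^\ell$ and the $u$-drift $\langle\dot J_0(u(t,g_t(x)),u(t,\tilde g_t(x))),e_t(x)\rangle$ — plus a Riemann term $\f12\sum_\ell\int_0^1\langle\n_{J_a^\ell}\n_{J_a^\ell}T_a,T_a\rangle\,da$ that the integration by parts above (using $\n_{J_a^\ell}J_a^\ell=0$ at $a=0,1$) rewrites as $-\f12\sum_\ell\int_0^1\langle R(T_a,J_a^\ell)J_a^\ell,T_a\rangle\,da$; the quadratic term $\sum_\ell\int_0^1\|\dot J_a^\ell\|^2\,da$; and the It\^o correction $-\f1{8\rho_t(x)^3}d(\|T_0\|^2)\cdot d(\|T_0\|^2)=-\f1{2\rho_t(x)}\sum_\ell\langle\dot J_0^\ell(t,x),e_t(x)\rangle^2\,dt$. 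Splitting $J_a^\ell=J_a^{\ell,T}+J_a^{\ell,N}$ along and perpendicular to $T_a$ and using that $\dot T_a=0$ (the geodesic equation) and that $a\mapsto\langle J_a^\ell,T_a\rangle$ is affine — so $\langle\dot J_a^\ell,T_a\rangle$ is $a$-independent and $\dot J_a^{\ell,T}$ is proportional to $e_t(x)$ — the squared tangential contributions cancel the It\^o correction and only the normal parts survive; this yields the displayed $d\rho_t(x)$.

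Next I would drop the regularity assumption and invoke Kendall's comparison argument \cite{Kendall:86}: $\rho_t(x)$ remains a semimartingale and acquires a continuous nondecreasing term entering with a minus sign, $-dL_t(x)$, supported on the times $(g_t(x),\tilde g_t(x))$ visits the cutlocus. Then, since $\rho_t^2=\int_M\rho_t(x)^2\,dx$, I would apply It\^o once more to $\rho_t=\bigl(\int_M\rho_t(x)^2\,dx\bigr)^{1/2}$ via stochastic Fubini; the new It\^o correction is $-\f1{2\rho_t^3}\sum_\ell\bigl(\int_M\rho_t(x)\langle\dot J_0^\ell(t,x),e_t(x)\rangle\,dx\bigr)^2\,dt$, and $\f1{2\rho_t}\int_M$ of the bracket of $\rho_t(x)$ contributes $\f1{2\rho_t}\int_M\sum_\ell\langle\dot J_0^\ell(t,x),e_t(x)\rangle^2\,dx\,dt$. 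Finally, to match the stated form, I would use $\langle\dot J_0^\ell,T_0\rangle=\langle\s_\ell(\tilde g_t(x)),T_1\rangle-\langle\s_\ell(g_t(x)),T_0\rangle$ together with the fact that parallel transport along the geodesic carries $e_t(x)$ to the unit tangent at the far endpoint, so that $\rho_t(x)\langle\dot J_0^\ell(t,x),e_t(x)\rangle=\rho_t(x)\langle e_t(x),P_{\tilde g_t(x),g_t(x)}(\s_\ell^T(\tilde g_t(x)))-\s_\ell^T(g_t(x))\rangle$ and likewise for the $u$-drift; and I would combine the two leftover bounded-variation terms into $\f1{2\rho_t}\sum_\ell\bigl(1-\cos^2(\dot J_0^{\ell,T}(t,\cdot),T_0(t,\cdot))\bigr)\int_M\|\dot J_0^{\ell,T}(t,x)\|^2\,dx$, using that $\dot J_0^{\ell,T}(t,x)$ is a scalar multiple of $T_0(t,x)$ and the definition of the cosine.

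The main obstacle is the cutlocus step: off the cutlocus the identity is a routine (if long) It\^o computation, but where the cutlocus is charged the radial process is not a priori a semimartingale, and one must argue both that the extra term is a genuine local time and that its sign is correct, following \cite{Kendall:86}; a secondary technical point is justifying differentiation under $\int_M$ (a stochastic Fubini argument) and the smoothness of $(x,y)\mapsto\g_a(x,y)$ off the cutlocus.
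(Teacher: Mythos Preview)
Your proposal is correct and follows essentially the same route as the paper: compute $\SD T_a$ via the Jacobi-field expansion of $\g_a(x_t,y_t)$, apply It\^o's formula to $\rho_t(x)=\bigl(\int_0^1\|T_a\|^2\,da\bigr)^{1/2}$, use the integration by parts (with $\n_{J_a^\ell}J_a^\ell=0$ at the endpoints) to turn the second-order term into a curvature integral, add the cutlocus local time via Kendall's argument, and then It\^o-differentiate $\rho_t=\bigl(\int_M\rho_t(x)^2\,dx\bigr)^{1/2}$. Your treatment of the tangential/normal split is in fact more explicit than the paper's: the paper passes silently from $\|\dot J_a^\ell\|^2$ and the It\^o correction $-\tfrac{1}{2\rho_t(x)}\langle\dot J_0^\ell,e_t(x)\rangle^2$ to $\|\dot J_a^{\ell,N}\|^2$, while you spell out why the tangential pieces cancel.
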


In the case of manifolds with negative curvature we may observe a similar phenomena
to the one of the torus with the Euclidean distance treated in Section 4: as long as the $L^\infty$ norm stays sufficiently small to avoid the cut-locus of the manifold,
the $L^2$ mean  distance between the stochastic particles tends to increase exponentially fast.

\vskip 10mm
\subsection{The rotation process}
$ $

\vskip 2mm

In the following we would like to study the rotation of two particles $g_t(x)$ and $\tilde g_t(x)$ when they are in a close distance one to another.
Recall that we have noted $x_t=g_t(x)$, $y_t=\tilde g_t(x)$.  We always assume that the distance from $x_t$ to $y_t$ is small: we are interested in the behaviour of $e(t)$ as $\rho_t(x)$ goes to~$0$.
We  let 
\begin{equation}
 \label{E82}
d_mx(t)^N=\s(x_t)dB_t-\langle \s(x_t)dB_t,e(t)\rangle e(t)
\end{equation}
and 
\begin{equation}
 \label{E82bis}
d_my(t)^N=\s(y_t)dB_t-\langle \s(y_t)dB_t,P_{x_t,y_t}e(t)\rangle P_{x_t,y_t}e(t)
\end{equation}
where $P_{x_t,\g_a(t)}$ denotes the parallel transport along $\g_a$.

>From It\^o formula we have 
\begin{equation}
 \label{R1}
\SD T_0=\rho_t(x)\SD e(t)+d\rho_t(x) e(t)+ d\rho_t(x)\SD e(t)
\end{equation}
and this yields 
\begin{align*}
\SD e(t)&=\f1{\rho_t(x)}\SD T_0-\f1{\rho_t(x)}d\rho_t(x) e(t)-\f12 \f1{\rho_t(x)}d\rho_t(x) \SD e(t)\\
&=\f1{\rho_t(x)}\dot J_0(d_mx(t)^N, d_my(t)^N)\\
&+ \f1{\rho_t(x)}\dot J_0(u(t,x_t),u(t,y_t))\,dt+\f1{2\rho_t(x)}\sum_{\ell\ge 0} \n_{J_0^\ell}\n_{J_0^\ell}T_0\,dt\\
&-\f1{\rho_t(x)}\left\langle P_{y_t,x_t}(u(t,y_t))-u(t,x_t),e(t)\right\rangle e(t)\\
&-\f1{2\rho_t(x)^2} \left(\int_0^1\sum_{\ell \ge 0}\left(\|\n_{T_a}J_a^\ell\|^2-R(T_a,J_a^\ell)J_a^\ell,T_a\right)\,da\right)  e(t)\\&
-\f12 \f1{\rho_t(x)}d\rho_t(x) \SD e(t)\\
&=\f1{\rho_t(x)}\dot J_0(d_mx(t)^N, d_my(t)^N)+\f1{\rho_t(x)}\dot J_0(u^N(t,x_t),u^N(t,y_t))\\
&+\f1{2\rho_t(x)}\sum_{\ell\ge 0} \n_{J_0^\ell}\n_{J_0^\ell}T_0\,dt\\
&-\f1{2\rho_t(x)^2} \left(\int_0^1\sum_{\ell\ge 0}\left(\|\n_{T_a}J_a^\ell\|^2-R(T_a,J_a^\ell)J_a^\ell,T_a\right)\,da\right)  e(t)
\end{align*}
where we used the fact that $d\rho_t(x) \SD e(t)=0$, and where $u^N$ denotes the part of $u$ which is normal to the geodesic $\g_a$.
Now as before 
\begin{align*}
\n_{J_0^\ell}\n_{J_0^\ell}T_0&=\n_{T_0}\n_{J_0^\ell}J_0^\ell-R(T_0, J_0^\ell)J_0^\ell.
\end{align*}
Finally we get
\begin{lemma}
\label{L5}
\begin{align*}
\SD e(t)&=\f1{\rho_t(x)}\dot J_0(d_mx(t)^N, d_my(t)^N)+\f1{\rho_t(x)}\dot J_0(u^N(t,x_t),u^N(t,y_t))\\
&+\f1{2\rho_t(x)}\sum_{\ell\ge 0}\n_{T_0}\n_{J^\ell}J^\ell-R(T_0, J_0^\ell)J_0^\ell\,dt\\
&-\f1{2\rho_t(x)^2} \left(\int_0^1\sum_{\ell\ge 0}\left(\|\n_{T_a}J_a^\ell\|^2-R(T_a,J_a^\ell)J_a^\ell,T_a\right)\,da\right)  e(t).
\end{align*}
\end{lemma}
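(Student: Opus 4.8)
The plan is to differentiate the polar decomposition $T_0(t)=\rho_t(x)\,e(t)$ of the initial velocity of the minimizing geodesic and to solve for $\SD e(t)$. Applying the covariant It\^o product rule to the real semimartingale $\rho_t(x)$ and the $T_{x_t}M$-valued process $e(t)$ gives \eqref{R1}, $\SD T_0(t)=\rho_t(x)\,\SD e(t)+e(t)\,d\rho_t(x)+d\rho_t(x)\,\SD e(t)$, hence
\[
\SD e(t)=\frac1{\rho_t(x)}\Bigl(\SD T_0(t)-e(t)\,d\rho_t(x)-d\rho_t(x)\,\SD e(t)\Bigr).
\]
Into this I would feed the two ingredients already prepared above: first, the second-variation formula $\SD T_a=\dot J_a(dx_t,dy_t)+\tfrac12\sum_{\ell}\n_{J_a^\ell}\n_{J_a^\ell}T_a\,dt$ (obtained by expanding $\SD\dot\g_a$ through $d\g_a(t)=J_a(dx_t,dy_t)+\tfrac12(\n_{(dx_t,dy_t)}J_a)(dx_t,dy_t)$ and using $\n_{J_0^\ell}J_0^\ell=\n_{J_1^\ell}J_1^\ell=0$), evaluated at $a=0$, with $dx_t=\s(x_t)\,dB_t+u(t,x_t)\,dt$ and $dy_t=\s(y_t)\,dB_t+u(t,y_t)\,dt$; and second, the expression for $d\rho_t(x)$ obtained in the derivation of Proposition~\ref{P3}, taken at fixed $x$ with $y_t$ off the cut locus of $x_t$ (no local time appears since $\rho_t(x)$ is kept small), whose martingale part is $\sum_\ell\langle\dot J_0^\ell(t,x),e_t(x)\rangle\,dB_t^\ell$ and whose drift is $\langle\dot J_0(u(t,x_t),u(t,y_t)),e_t(x)\rangle\,dt+\tfrac1{2\rho_t(x)}\int_0^1\sum_\ell\bigl(\|\dot J_a^{\ell,N}\|^2-\langle R(T_a,J_a^{\ell,N})J_a^{\ell,N},T_a\rangle\bigr)\,da\,dt$.

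The bulk of the work is then the tangential/normal bookkeeping along $\g$. Using that $a\mapsto\langle J_a,T_a\rangle$ is affine for Jacobi fields, for $v\in T_{x_t}M$, $w\in T_{y_t}M$ one has $\dot J_0(v,w)=\dot J_0(v^T,w^T)+\dot J_0(v^N,w^N)$ with $\dot J_0(v^T,w^T)$ collinear to $e(t)$ and $\langle\dot J_0(v^T,w^T),e(t)\rangle=\langle\dot J_0(v,w),e(t)\rangle$. Consequently the martingale part of $\tfrac1{\rho_t(x)}\SD T_0-\tfrac1{\rho_t(x)}e(t)\,d\rho_t(x)$ collapses to $\tfrac1{\rho_t(x)}\dot J_0(d_mx(t)^N,d_my(t)^N)$, the $e(t)$-collinear piece of $\dot J_0(u(t,x_t),u(t,y_t))$ is cancelled leaving $\tfrac1{\rho_t(x)}\dot J_0(u^N(t,x_t),u^N(t,y_t))$, the second piece of the drift of $d\rho_t(x)$ produces the radial term $-\tfrac1{2\rho_t(x)^2}\bigl(\int_0^1\sum_\ell(\|\n_{T_a}J_a^\ell\|^2-\langle R(T_a,J_a^\ell)J_a^\ell,T_a\rangle)\,da\bigr)e(t)\,dt$, and the Hessian drift passes through as $\tfrac1{2\rho_t(x)}\sum_\ell\n_{J_0^\ell}\n_{J_0^\ell}T_0\,dt$. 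This reproduces the intermediate formula for $\SD e(t)$ displayed just before the statement.

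The final step is the commutation identity for a one-parameter family of geodesics: since $T_a$ is the velocity field and $J_a^\ell$ a variation field of $a\mapsto\g_a$, we have $\n_{J_0^\ell}T_0=\n_{T_0}J_0^\ell$, whence $\n_{J_0^\ell}\n_{J_0^\ell}T_0=\n_{J_0^\ell}\n_{T_0}J_0^\ell=\n_{T_0}\n_{J_0^\ell}J_0^\ell-R(T_0,J_0^\ell)J_0^\ell$; substituting this into the Hessian drift term turns the intermediate formula into the expression stated in Lemma~\ref{L5}.

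The step I expect to be the main obstacle is the asserted vanishing of the quadratic covariation $d\rho_t(x)\,\SD e(t)$ in~\eqref{R1}. Since $\SD e(t)$ has an order-one martingale part while that of $d\rho_t(x)$ is of order $\rho_t(x)$, one has to check that this bracket does not survive after division by $\rho_t(x)$. The relevant quantity is $\sum_\ell\langle\dot J_0(\s_\ell^T(x_t),\s_\ell^T(y_t)),e_t\rangle\,\dot J_0(\s_\ell^N(x_t),\s_\ell^N(y_t))$, and here the hypothesis $\s(z)\s^\ast(z)=2\nu{\bf g}^{-1}(z)$ is essential: the two ``diagonal'' contributions (all slots at $x_t$, respectively all at $y_t$) each reduce to $2\nu e_t$ minus $2\nu e_t$ and vanish, while the cross contributions are of lower order in $\rho_t(x)$. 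Verifying that what remains is indeed negligible, after the $1/\rho_t(x)$ factors, in the small-distance regime that is the subject of this subsection---together with the companion care needed, in the manner of \cite{Kendall:86}, near the cut locus, which is avoided here precisely because $\rho_t(x)$ is kept small---is the only genuinely delicate point; everything else is a direct, if lengthy, It\^o expansion.
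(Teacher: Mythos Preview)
Your approach is exactly the paper's: apply the product rule~\eqref{R1}, insert the expressions for $\SD T_0$ and $d\rho_t(x)$ derived just above, perform the tangential/normal splitting, and finish with the commutation $\n_{J_0^\ell}\n_{J_0^\ell}T_0=\n_{T_0}\n_{J_0^\ell}J_0^\ell-R(T_0,J_0^\ell)J_0^\ell$. The step you single out as the obstacle---the vanishing of $d\rho_t(x)\,\SD e(t)$---is disposed of in the paper by the bare line ``where we used the fact that $d\rho_t(x)\SD e(t)=0$'' with no further argument, so your isotropy heuristic already goes beyond what the paper supplies.
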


>From now on we assume that $M=\TT$ the two dimensional torus. 

In this situation the curvature tensor vanishes and we have the formulas 
$$
J_a(v,w)=v+a(w-v),\qquad \dot J_a(v,w)=w-v.
$$
We immediately get 
\begin{align*}
de(t)=\SD e(t)=&\f1{\rho_t(x)}\left(d_my(t)^N- d_mx(t)^N\right)+\f1{\rho_t(x)}\left((u^N(t,y_t)-u^N(t,x_t)\right)\,dt\\&-\f1{2\rho_t(x)^2}\sum_{\ell\ge 0}\left\|\s_\ell(y_t)-\s_\ell(x_t)\right\|^2\,dt\, e(t)
\end{align*}
where we used the fact that $\n_{T_0}\n_{J^\ell}J^\ell=0$, as a consequence of $\n_{J_0^\ell}J_0^\ell=0$, $\n_{J_1^\ell}J_1^\ell=0$, and $R\equiv 0$.

Let us specialize again to the case where the vector fields are given by 

$$A_k (\theta )=(k_2 ,-k_1 )\cos k.\theta ,\quad B_k (\theta )=(k_2 ,-k_1 )\sin k.\theta$$
 and the Brownian motion

\begin{equation}
 \label{R3}
dW(t)=\sum_{k \in \ZZ }{\l_k\sqrt \nu}(A_k dx_k +B_k dy_k )
\end{equation}
 where $x_k ,y_k $ are independent copies of real Brownian motions. As in section~\ref{Section6} 
we assume that $\sum_k |k|^2 \l_k^2 <\infty$ and we consider
$\lambda_k =\lambda (|k|)$ to be nonzero for a equal number of $k_1$ and $k_2$
components. Again we write 
\begin{equation}
\label{R4} 
dg_t =(odW(t))+u(t, g_t )dt,\qquad d\tilde g_t =(odW(t))+u(t, \tilde g_t )dt
\end{equation}
with 

$$g_0 =\phi ,\qquad\tilde g_0 =\psi, \qquad \phi\not=\psi.$$

Changing the notation to $g_t=g_t(\theta)=x_t$, $\tilde g_t=\tilde g_t(\theta)=y_t$, we get
\begin{equation*}
\begin{split}
de(t)&=\f1{\rho_t(\theta)}\sum_{|k|\not=0}\l_k\sqrt{\nu}\left(\cos k\cdot \tilde g_t-\cos k\cdot g_t\right)k^{\bot, N}dx_k\\
&+\f1{\rho_t(\theta)}\sum_{|k|\not=0}\l_k\sqrt{\nu}\left(\sin k\cdot \tilde g_t-\sin k\cdot g_t\right)k^{\bot, N}dy_k\\
&+\f1{\rho_t(\theta)}\left((u^N(t,\tilde g_t)-u^N(t,g_t)\right)\,dt\\
&-\f1{2\rho_t^2(\theta)}\sum_{|k|\not=0}\l_k^2\nu |k^{\bot, N}|^2\left(\left(\cos k\cdot \tilde g_t-\cos k\cdot g_t\right)^2+\left(\sin k\cdot \tilde g_t-\sin k\cdot g_t\right)^2\right)e (t) \,dt\\
&=\f1{\rho_t(\theta)}\sum_{|k|\not=0}\l_k\sqrt{\nu}k^{\bot, N}\left(2\sin\f{k\cdot(\tilde g_t-g_t)}{2}\right)dz_k\\
&+\f1{\rho_t(\theta)}\left((u^N(t,\tilde g_t)-u^N(t,g_t)\right)\,dt\\
&-\f2{\rho_t^2(\theta)}\sum_{|k|\not=0}\l_k^2\nu |k^{\bot, N}|^2\sin^2 (\f{k\cdot(\tilde g_t-g_t)}{2}) e(t)\,dt
\end{split}
\end{equation*}
where $z_k$ is the Brownian motion defined by $$ dz_k=-\sin\f{k\cdot(\tilde g_t+g_t)}{2}\, dx_k+\cos\f{k\cdot(\tilde g_t+g_t)}{2}\, dy_k.$$
Noting $\di |k^{\bot, N}|^2=|k|^2 (n_k\cdot e(t))^2$, we obtain
\begin{equation}
 \label{R5}
\begin{split}
 de(t)&=\f1{\rho_t(\theta)}\sum_{|k|\not=0}|k|\l_k\sqrt{\nu}(n_k\cdot e(t))e'(t)\left(2\sin\f{k\cdot(\tilde g_t-g_t)}{2}\right)dz_k\\
&+\f1{\rho_t(\theta)}\left((u^N(t,\tilde g_t)-u^N(t,g_t)\right)\,dt\\
&-\f2{\rho_t^2(\theta)}\sum_{|k|\not=0}|k|^2\l_k^2\nu (n_k\cdot e(t))^2\sin^2\f{k\cdot(\tilde g_t-g_t)}{2} e(t)\,dt
\end{split}
\end{equation}
where $e'(t)$ is a unit vector in $\TT$ orthonormal to $e(t)$.
Now for every $K>0$,  if $\di \rho_t(\theta)\le \f{\pi}{2K}$ then for all $k$ such that $|k|\le K$,
$$
\f{\sin^2\f{k\cdot(\tilde g_t-g_t)}{2}}{|k|^2\rho_t^2(\theta)(n_k\cdot e(t))^2}\ge \f1{\pi^2}.
$$
Now using $|k|=|k^\bot|$ and $(n_k\cdot e(t))^2+(n_{k^\bot}\cdot e(t))^2=1$, we get 
\begin{equation}
 \label{R6}
\f2{\rho_t^2(\theta)}\sum_{|k|\not=0}|k|^2\l_k^2\nu (n_k\cdot e(t))^2\sin^2\f{k\cdot(\tilde g_t-g_t)}{2}\ge \f{\nu}{2\pi^2}\sum_{0<|k|<K}\l_k^2|k|^4.
\end{equation}
Observe that the term in the left is the second part of the drift in equation~\eqref{R5} as well as the derivative of the quadratic variation of $e(t)$. This yields the following result.
\begin{prop}
Identifying $T\TT$ with $\CC$, we have 
$\di 
e(t)=e^{iX_t}
$
where $X_t$ is a real-valued semimartingale with quadratic variation 
\begin{equation}\label{R7}
d[X,X]_t=\f4{\rho_t^2(\theta)}\sum_{|k|\not=0}|k|^2\l_k^2\nu (n_k\cdot e(t))^2\sin^2\f{k\cdot(\tilde g_t-g_t)}{2}\,dt
\end{equation}
and drift 
\begin{equation}\label{R8}
\int_0^t\f1{\rho_s(\theta)}\left\langle u(s,\tilde g_s)-u(s,g_s), ie(s)\right\rangle\,ds.
\end{equation}
We have for all $K>0$, on $\di \left\{\rho_t(\theta)\le \f{\pi}{2K}\right\}$,
\begin{equation}\label{R9}
d[X,X]_t\ge \f{\nu}{\pi^2}\sum_{0<|k|<K}\l_k^2|k|^4.
\end{equation}
 If 
$\di
\sum_{|k|\not=0}\l_k^2|k|^4=+\infty,
$
then as $\tilde g_t(\theta)$ gets closer and closer to $g_t(\theta)$, the rotation $e(t)$ becomes more and more irregular in the sense that the derivative of the quadratic variation of $X_t$ tends to infinity. 
\end{prop}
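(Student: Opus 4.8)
The plan is to construct $X_t$ as the lift of the circle-valued process $e(t)$ along the universal covering $x\mapsto e^{ix}$ of $S^1$, and then to read off its quadratic variation and drift directly from the already-derived It\^o equation~\eqref{R5}; the lower bound~\eqref{R9} is precisely~\eqref{R6}, so the only genuinely new ingredient beyond~\eqref{R5}--\eqref{R6} is the identification $e(t)=e^{iX_t}$.

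\emph{The lifting.} Identifying $T\TT$ with $\CC$, the process $e(t)$ has $\|e(t)\|=1$, hence takes values in the unit circle $S^1\subset\CC$; it is well defined and continuous for each $\theta$ with $\phi(\theta)\neq\psi(\theta)$, since then $\rho_t(\theta)>0$ for all $t$ by the argument of Lemma~\ref{L1}, and such $\theta$ form a set of positive measure because $\phi\neq\psi$. Equation~\eqref{R5} exhibits $e(t)$ as a continuous semimartingale, and a continuous $S^1$-valued semimartingale lifts along $x\mapsto e^{ix}$ to a continuous real semimartingale; fixing $X_0$ with $e^{iX_0}=e(0)$ we get $e(t)=e^{iX_t}$.

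\emph{Characteristics of $X_t$.} It\^o's formula gives $de(t)=ie(t)\,dX_t-\f12 e(t)\,d[X,X]_t$, where we fix the orientation $e'(t)=ie(t)$ in~\eqref{R5}. The key preliminary point is that the processes $(z_k)_{|k|\neq 0}$ appearing in~\eqref{R5} are mutually independent Brownian motions: each satisfies $d[z_k,z_k]=\sin^2(\cdots)\,dt+\cos^2(\cdots)\,dt=dt$, and $d[z_k,z_{k'}]=0$ for $k\neq k'$ because $x_k,y_k,x_{k'},y_{k'}$ are independent. Matching the martingale part of~\eqref{R5} with $ie(t)\,dX_t$ then identifies the martingale part of $X_t$ as $\f1{\rho_t(\theta)}\sum_{|k|\neq 0}|k|\l_k\sqrt{\nu}\,(n_k\cdot e(t))\,2\sin\f{k\cdot(\tilde g_t-g_t)}{2}\,dz_k$, which yields~\eqref{R7} for $d[X,X]_t$; projecting the drift of~\eqref{R5} onto $e(t)$ recovers $-\f12 d[X,X]_t$, consistently with~\eqref{R7}, and projecting it onto $ie(t)$ gives the drift~\eqref{R8} (there $u^N$ may be replaced by $u$, since $u-u^N$ is tangent to the geodesic, hence orthogonal to $ie(t)$).

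\emph{The lower bound and the conclusion.} On $\{\rho_t(\theta)\le\f{\pi}{2K}\}$ one has $\tilde g_t-g_t=\rho_t(\theta)e(t)$ in the flat torus, so for $0<|k|<K$, $|\f12 k\cdot(\tilde g_t-g_t)|=\f12\rho_t(\theta)|k|\,|n_k\cdot e(t)|\le\f{\pi}{4}$, and Jordan's inequality $|\sin x|\ge\f2\pi|x|$ on $[-\f\pi2,\f\pi2]$ gives $\sin^2\f{k\cdot(\tilde g_t-g_t)}{2}\ge\f1{\pi^2}|k|^2\rho_t^2(\theta)(n_k\cdot e(t))^2$. Substituting this into~\eqref{R7}, discarding the terms with $|k|\ge K$, and pairing each $k$ with $k^\bot$ --- using $\l_k=\l_{k^\bot}$, $|k|=|k^\bot|$, and $(n_k\cdot e(t))^4+(n_{k^\bot}\cdot e(t))^4\ge\f12$, which follows from $(n_k\cdot e(t))^2+(n_{k^\bot}\cdot e(t))^2=1$ --- produces~\eqref{R9}, exactly as~\eqref{R6} was derived. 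For the final assertion, if $\sum_{|k|\neq 0}\l_k^2|k|^4=+\infty$ then, given any $C>0$, one may choose $K$ with $\f\nu{\pi^2}\sum_{0<|k|<K}\l_k^2|k|^4>C$; by~\eqref{R9}, on $\{\rho_t(\theta)\le\f{\pi}{2K}\}$ the rate $d[X,X]_t/dt$ of the quadratic variation of $X_t$ exceeds $C$, so it tends to $+\infty$ as $\rho_t(\theta)\to 0$. I expect the lifting step --- confirming that $e(t)$ is a genuine, everywhere-defined, non-degenerate $S^1$-valued semimartingale along the relevant trajectories --- to be the only point needing care; the remainder is bookkeeping on~\eqref{R5} and~\eqref{R6}, which are already established.
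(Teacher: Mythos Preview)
Your proposal is correct and follows essentially the same approach as the paper: the proposition is stated immediately after the derivations~\eqref{R5} and~\eqref{R6}, with the paper noting that the left-hand side of~\eqref{R6} is simultaneously the second drift term in~\eqref{R5} and the derivative of the quadratic variation of $e(t)$. You have simply made the implicit steps explicit --- the lifting of $e(t)$ to $X_t$, the mutual independence of the $z_k$, and the pairing argument behind~\eqref{R6} (the paper mentions only $(n_k\cdot e(t))^2+(n_{k^\bot}\cdot e(t))^2=1$, but the fourth-power inequality you state is what is actually needed).
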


\medbreak\noindent
{\bf Acknowledgment.}
The second author wishes to thank the support of the Universit\'e de Poitiers. This work has also benefited from the portuguese grant\goodbreak PTDC/MAT/69635/2006.

%%%%%%%%%%%%%%%%%%%%%%%%%%%%%%%%%%%%%%%%%%%%%%%%%%%%%%%%%%%%%%%%%%%%%%%%%
%
%   R E F E R E N C E S 
%
%%%%%%%%%%%%%%%%%%%%%%%%%%%%%%%%%%%%%%%%%%%%%%%%%%%%%%%%%%%%%%%%%%%%%%%%%

\providecommand{\bysame}{\leavevmode\hbox to3em{\hrulefill}\thinspace}


\begin{thebibliography}{10}

\bibitem{Arnaudon-Thalmaier:03}
M. Arnaudon and A. Thalmaier, \emph{Horizontal martingales in vector bundles},
S\'eminaire de Probabilit\'es, XXXVI, 419--456, Lecture Notes in Math., 1801, Springer, Berlin, 2003.

\bibitem{Arnold:66} V. I.~Arnold, \emph{Sur la g\'eom\'etrie diff\'erentielle des groupes de Lie de dimension infinie et ses applications \`a l'hydrodynamique des fluides parfaits}, Ann. Inst. Fourier   \textbf{16}  (1966),  316--361.

  \bibitem{Cipriano-Cruzeiro:07} F.~Cipriano and A.B.~Cruzeiro, \emph{Navier-Stokes equation and diffusions on the group of homeomorphisms of the torus}, Comm. Math. Phys.  \textbf{275}  (2007),  no. 1, 255--269.
  
  
  \bibitem{Constantin-Iyer:08} P.~Constantin and G.~Iyer, \emph{A stochastic Lagrangian representation of the three dimensional incompressible Navier-Stokes equations}, Comm. Pure. Appl. Math. LXI  (2008),  330--345.
  
\bibitem{Kendall:86} W.S.~Kendall, \emph{Nonnegative Ricci curvature and the Brownian coupling property}, Stochastics \textbf{19} (1986), no. 1-2, 111--129.

\bibitem{Misiolek:93} G.~Misiolek, \emph{Stability of flows of ideal fluids and the geometry of the group of diffeomorphisms}, Indiana Univ. Math. J.  \textbf{42}  (1993),  no. 1, 215--235.


\bibitem{Nagasawa:97}  T.~Nagasawa, \textit{Navier-Stokes flow on Riemannian manifolds} Nonlinear An., Th., Methods and Applic.   \textbf{30} (1997)  no. 2, 825--832.


\bibitem{Nakagomi:81} t. T.~Nakagomi, K.~Yasue and J.-C. ~Zambrini, \textit{Stochastic variational derivations of the Navier-Stokes equation.} Lett. Math. Phys.  \textbf{160} (1981), 337--365.



\bibitem{Yasue:83} K.~Yasue, \textit{A variational principle for the Navier-Stokes equation.} J. Funct. Anal. \textbf{51}(2) (1983), 133--141.

  
  

\end{thebibliography}
\end{document}